\newcommand{\term}{\emph}
\newcommand{\field}[1]{\mathbb{#1}}
\newcommand{\N}{\mathbb{N}}
\newcommand{\R}{\field{R}}
\newcommand{\extR}{\overline \R}
\newcommand{\B}{B}
\newcommand{\norm}[1]{\|#1\|}
\newcommand{\abs}[1]{|#1|}
\newcommand{\inv}[1]{#1^{-1}}
\newcommand{\grad}{\nabla}
\newcommand{\freevar}{\,\boldsymbol\cdot\,}
\newcommand{\Union}\bigcup
\newcommand{\Isect}\bigcap
\newcommand{\union}\cup
\newcommand{\isect}\cap
\newcommand{\bigunion}\bigcup
\newcommand{\bigisect}\bigcap
\newcommand{\defeq}{:=}
\DeclareRobustCommand{\upto}{{{\mathchoice%
            {\rotatebox[origin=c]{20}{$\to$}}
            {\rotatebox[origin=c]{20}{$\to$}}
            {\rotatebox[origin=c]{20}{\scalebox{0.75}{$\to$}}}
            {\rotatebox[origin=c]{20}{\scalebox{0.6}{$\to$}}}
}}}
\newcommand{\subdiff}{\partial}
\DeclareMathOperator*{\argmin}{arg\,min}
\DeclareMathOperator{\closure}{cl}
\DeclareMathOperator{\Dom}{dom}
\DeclareMathOperator{\diag}{diag}
\DeclareMathOperator{\graph}{graph}
\def \uminus@sym{\setbox0=\hbox{$\cup$}\rlap{\hbox 
        to\wd0{\hss\raise0.5ex\hbox{$\scriptscriptstyle{-}$}\hss}}\box0}
    \def \uminus    {\mathrel{\uminus@sym}}
\newcommand{\iprod}[2]{\langle #1,#2\rangle}
\def \weaktostar@sym{\setbox0=\hbox{$\rightharpoonup$}\rlap{\hbox 
        to\wd0{\hss\raise1ex\hbox{$\scriptscriptstyle{*\,}$}\hss}}\box0}
    \def \weaktostar    {\mathrel{\weaktostar@sym}}
\def\linear{\mathbb{L}}
\newcommand{\setto}{\rightrightarrows}
\def\extR{\overline \R}
\def\realopt#1{\widehat #1}
\def\this#1{#1^i}
\def\nexxt#1{#1^{i+1}}
\def\overnext#1{\bar #1^{i+1}}
\def\realoptu{{\realopt{u}}}
\def\realoptx{{\realopt{x}}}
\def\realoptq{{\realopt{q}}}
\def\realopty{{\realopt{y}}}
\def\nextu{\nexxt{u}}
\def\nextx{\nexxt{x}}
\def\nexty{\nexxt{y}}
\def\thisu{\this{u}}
\def\thisx{\this{x}}
\def\thisy{\this{y}}
\def\overnextx{\overnext{x}}
\def\tauTest{\phi}
\def\sigmaTest{\psi}
\def\gap{\mathcal{G}}
\def\Space{U}
\def\SpaceTwo{W}
\newcommand{\Test}{Z}
\newcommand{\Precond}{M}
\DeclareFontFamily{U}{mathx}{\hyphenchar\font45}
\DeclareFontShape{U}{mathx}{m}{n}{<-> mathx10}{}
\DeclareSymbolFont{mathx}{U}{mathx}{m}{n}
\DeclareMathAccent{\widebar}{0}{mathx}{"73}
\def\realoptw{\realopt{w}}
\DeclareMathOperator{\dist}{dist}
\def\bar{\widebar}
\def\tilde{\widetilde}
\renewcommand{\B}{\mathbb{B}}
\shorttitle{Partial subregularity}
\def\thetitle{Preconditioned proximal point methods and notions of partial subregularity}
\title{\thetitle}
\begin{document}

\date{2017-11-15 (revised 2020-02-28)}
\author{
    Tuomo Valkonen\thanks{%
        Department of Mathematics and Statistics, University of Helsinki, Finland \emph{and} ModeMat, Escuela Politécnica Nacional, Quito, Ecuador; Most of this work was completed while at the Department of Mathematical Sciences, University of Liverpool, United Kingdom. \email{tuomo.valkonen@iki.fi} \orcid{0000-0001-6683-3572}
    }
}

\maketitle

\begin{abstract}
	Based on the needs of convergence proofs of preconditioned proximal point methods, we introduce notions of partial strong submonotonicity and partial (metric) subregularity of set-valued maps. We study relationships between these two concepts, neither of which is generally weaker or stronger than the other one.
	For our algorithmic purposes, the novel submonotonicity turns out to be easier to employ than more conventional error bounds obtained from subregularity.
	Using strong submonotonicity, we demonstrate the linear convergence of the Primal-Dual Proximal splitting method to some strictly complementary solutions of example problems from image processing and data science. This is without the conventional assumption that all the objective functions of the involved saddle point problem are strongly convex.
\end{abstract}

\newenvironment{demonstration}[1][Verification]{\begin{proof}[#1]}{\end{proof}}

\section{Introduction}
\label{sec:intro}

What is the weakest useful form of regularity of a set-valued map $H$? In particular, if $0 \in H(\realoptu)$ for $\realoptu=(\realoptx,\realopty)$ encodes optimality conditions of a saddle point problem
\begin{equation}
	\label{eq:saddle-intro}
    \min_{x \in X} \max_{y \in Y}~G(x) + \iprod{Kx}{y} - F^*(y),
\end{equation}
in Hilbert spaces $X$ and $Y$, what regularity property is useful for showing the fast convergence of optimisation methods? In this work, we study this question in relation to preconditioned proximal point methods \cite{tuomov-proxtest}.

A starting point for the regularity of set-valued maps is to extend the Lipschitz property of single-valued maps. One such approach is the \term{Aubin}, \term{pseudo-Lipschitz}, or \term{Lipschitz-like} property of \cite{aubin1984lipschitz}. 
When we are interested in the stability of the optimality condition $0 \in H(\realoptu)$, it is more beneficial to study the Aubin property of the inverse $\inv H$. This is called the \term{metric regularity} of $H$. In this property, \emph{at $\realoptu$ for $\realoptw \in H(\realoptu)$}, both $u$ and $w$ are allowed to vary in the criterion
\[
	\kappa \dist(w, H(u)) \ge \dist(u, \inv H(w))
	\quad (u \in \mathcal{U},\, w \in \mathcal{W}),
\]
which is assumed to hold for some $\kappa > 0$, and neighbourhoods $\mathcal{U} \ni \realoptu$ and $\mathcal{W} \ni \realoptw$. We make standard notation such as “$\dist$” explicit in \cref{sec:notation}.
Metric regularity is equivalent to \term{openness at a linear rate} at $(\realoptu, \realoptw)$, and holds for smooth maps by the classical Lyusternik--Graves theorem; see, e.g., \cite{ioffe2017variational}.
It is too strong a property to be satisfied in many applications.
\term{Metric subregularity} \cite{ioffe1979regular,dontchev2004regularity} allows much more leeway for $H$ by fixing $w=\realoptw$. In other words, we require
\begin{equation}
	\label{eq:basic-subreg}
	\kappa \dist(\realoptw, H(u)) \ge \dist(u, \inv H(\realoptw))
	\quad (u \in \mathcal{U}).
\end{equation}
The counterpart of metric subregularity that relaxes the Aubin property is known as \term{calmness} or the \term{upper Lipschitz} property \cite{robinson1981continuity}.
We refer to the books \cite{aubin1990sva,mordukhovich2006variational,rockafellar-wets-va,dontchev2014implicit,ioffe2017variational} for further information on these and other related properties. These include the Mordukhovich criterion that allows verifying the Aubin property or metric regularity through coderivative considerations.


Monotonicity is an important basic property satisfied by the convex subdifferential.
For general set-valued maps $H: \Space \setto \Space$ on a Hilbert space $\Space$, it states
\[
    \iprod{q-q^*}{u-u^*} \ge 0
    \quad ( (u, q), (u^*, q^*) \in \graph H).
\]
Basic monotonicity does not help to achieve fast convergence of optimisation methods. This can be achieved with strong monotonicity
\[
    \iprod{q-q^*}{u-u^*} \ge \gamma\norm{u-u^*}^2
    \quad ( (u, q), (u^*, q^*) \in \graph H).
\]

In this work, in the spirit of subregularity, we want to relax strong monotonicity to \term{strong submonotonicity} at a point. To study the convergence of optimisation methods on subspaces, we will formulate both submonotonicity and subregularity in terms of operators: we introduce \term{partial (metric) subregularity} and \term{partial strong submonotonicity}.
In this theme, our \textbf{main contributions} are:
\begin{itemize}[label={--},nosep]
    \item In \cref{sec:submonotonicity} we introduce the new concept of \term{(partial, strong) submonotonicity}. Expanding from our previous work in \cite{tuomov-proxtest}, we apply it to the convergence proofs of abstract preconditioned proximal point methods.
    \item In \cref{sec:rates-from-error-bounds} we introduce partial variants of familiar error bounds and metric subregularity. Based on them we again formulate convergence results for abstract preconditioned proximal point methods.
    \item In \cref{sec:relationships} we study the relationship between submonotonicity and subregularity.
    \item In \cref{sec:saddle} we apply the work of the previous sections to obtain improved convergence results for the primal-dual proximal splitting (PDPS) of Chambolle and Pock \cite{chambolle2010first}.
\end{itemize}
Before this, in \cref{sec:notation}, we introduce notation.

There are further properties in the literature, related to our work.
\term{Strong metric subregularity} is a strengthening of metric subregularity. Its many properties are studied in \cite{cibulka2018strong}, along with introducing $q$-exponent versions. Particularly worth noting is that strong metric subregularity is invariant with respect to perturbations by smooth functions, while metric subregularity is not.
Minding our focus on monotonicity, of particular relevance is that for convex functions $f$, strong metric subregularity of the subdifferential amounts to local strong convexity.
Indeed, according to \cite{aragon2008characterization,artacho2013metric}, the (convex) subdifferential $\subdiff f$ is \emph{strongly} metrically subregular at $\realoptu$ for $\realoptw \in \subdiff f(\realoptu)$ if and only if there exists $\gamma>0$ and a neighbourhood $\mathcal{U} \ni \realoptu$ such that
\[
	f(u) - f(\realoptu) \ge \iprod{\realoptw}{u-\realoptu} + \gamma\norm{u-\realoptu}^2 \quad (u \in \mathcal{U}).
\]
Metric subregularity, on the other hand, amounts of measuring growth away from the entire set $\inv{[\subdiff f]}(\realoptw)$ instead of the single point $\realoptu$. This gives the corresponding criterion
\[
	f(u) - f(\realoptu) \ge \iprod{\realoptw}{u-\realoptu} + \gamma\dist^2(u, \inv{[\subdiff f]}(\realoptw)) \quad (u \in \mathcal{U}).
\]
In \cref{sec:relationships} we derive similar correspondences between partial metric subregularity and partial strong submonotonicity.

Some weaker and partial concepts of regularity have also been considered in the literature.
Of particular note is the directional metric subregularity of \cite{gfrerer2013directional}.
The idea here is to study necessary optimality conditions by only requiring metric regularity or subregularity to critical directions instead of all directions. A somewhat similar idea is considered in \cite{vanngai2015directional}.
We, by contrast, are interested in regularity on entire subspaces where our objective functions behave better than globally.
A form of directional metric subregularity more closely resembling our partial subregularity has been studied in \cite{durea2017directional,vanngai2012slopes}. This notion models directionality with subsets, while ours uses operators. Moreover, through the use of three rather than two operators, our definitions in \cref{sec:submonotonicity} split the partiality or directionality on two sides of the defining inequality. As we show in \cref{sec:relationships}, this yields a somewhat weaker property, sufficient for our needs.

Related to our objectives, partial calmness of bi-level programs with respect to perturbations of only certain optimality criteria have been studied in \cite{ye1995optimality}, see also \cite{henrion2011calmness}.
We have also ourselves studied partial strong convexity in \cite{tuomov-cpaccel,tuomov-blockcp} for the acceleration of optimisation methods on subspaces of strong convexity.
Moving from subspaces to manifolds, \cite{lewis2013partial,liang2014local} apply smoothness restricted to submanifolds to prove local linear convergence of optimisation algorithms to such submanifolds.

More generally, local linear convergence can be derived from error bounds, first introduced in \cite{luo1992error} for matrix splitting problems, and studied for other methods including the ADMM and the proximal point method, among others in \cite{han2013local,aspelmeier2016local,leventhal2009metric,li2012holder}. An alternative approach to the proximal point method is taken in \cite{aragon2012lyusternik} based on Lyusternik--Graves style estimates, while \cite{adly2015newton} presents an approach based on metric regularity to Newton's method for variational inclusions.
In \cite{zhou2017unified} a unified approach to error bounds for generic smooth constrained problems is introduced.
In \cref{sec:rates-from-error-bounds} we will do something roughly similar, but for nonsmooth optimisation.
Robinson stability \cite{robinson1976stability,gfrerer2017robinson} can also be seen as a stronger parametric version of error bounds, useful in the study of sensitivity of solution mappings. In this work, we are primarily interested in the application of our regularity criteria to the convergence of optimisation methods.

\section{Notation}
\label{sec:notation}

Let $X$ and $Y$ be Hilbert spaces. We denote by $\iprod{\freevar}{\freevar}$ and $\norm{\freevar}$ the corresponding inner products and induced norms.
We use $\linear(X; Y)$ to denote the space of bounded linear operators between $X$ and $Y$. We denote the identity operator by $I$.
For $T,S \in \linear(X; X)$, we write $T \ge S$ when $T-S$ is positive semidefinite.
Also for possibly non-self-adjoint $T \in \linear(X; X)$, we introduce the inner product and norm-like notations
\begin{equation}
    \label{eq:iprod-def}
    \iprod{x}{z}_T \defeq \iprod{Tx}{z},
    \quad
    \text{and}
    \quad
    \norm{x}_T \defeq \sqrt{\iprod{x}{x}_T}.
\end{equation}
For a set $A \subset X$, we write $\delta_A$ for the $\{0,\infty\}$-valued indicator function and define the distance
\[
    \dist_T(z, A) \defeq \inf_{u \in A } \norm{z-u}_T.
\]
We also write $\dist_T^2(z, A) \defeq \dist_T(z, A)^2$.

We denote the $\{0,\infty\}$-valued indicator function of a set $A \subset X$ by $\delta_A$, and write $A \ge 0$ if every element $t \in A$ satisfies $t \ge 0$. Also, we write $\iprod{A}{x}=\{\iprod{y}{x} \mid y \in A\}$ whenever $x \in X$, so that the inequality $\iprod{A}{x} \ge 0$ means $\iprod{y}{x} \ge 0$ for all $y \in A$.

Finally, we write $\B(x, \alpha)$ for the open ball of radius $\alpha$ around $x \in X$, and, for a possibly set-valued $H: X \setto Y$, the graph by $\graph H \defeq \{(x, y) \mid y \in H(x),\, x \in X\}$.

Let $f: X \setto \extR$. We write $\subdiff f$ for the convex subdifferential if $f$ convex. If $f$ is locally Lipschitz, we write $\subdiff_C$ for the Clarke subdifferential. 
We write $f^*$ for the Fenchel conjugate.

\section{Submonotonicity and preconditioned proximal point methods}
\label{sec:submonotonicity}

Our objective is to solve the variational inclusion $0 \in H(\realoptu)$, where $H: \Space \setto \Space$ is a set-valued map on a Hilbert space $\Space$.
Our strategy is for some iteration-dependent $\Precond_{i+1} \in \linear(\Space; \Space)$ the preconditioned proximal point method: given an initial iterate $u^0$, iteratively solve $\nextu$ from
\begin{equation}
    \label{eq:pp}
    \tag{PP}
    0 \in H(\nextu) + \Precond_{i+1}(\nextu-\thisu).
\end{equation}
In \cite{tuomov-proxtest} we developed general convergence theory of such methods and more general methods that include \emph{non-linear} preconditioners for the modelling forward steps. Here, for simplicity, we only consider linear preconditioning. 
By \cite{he2012convergence}, the primal-dual proximal splitting of \cite{chambolle2010first} is of the form \eqref{eq:pp}. We will discuss it in \cref{sec:saddle}. By \cite{tuomov-proxtest,clasonvalkonen2020nonsmooth} many more algorithms are of the form \eqref{eq:pp}.

We improve the results of \cite{tuomov-proxtest} to require weaker regularity of $H$ by only asking for convergence to the \emph{set} of roots $\inv H(0)$ instead of a specific (unique) root.
In this section, we specifically use submonotonicity that we define in \cref{sec:rates-from-monotonicity} while in \cref{sec:rates-from-error-bounds} we look at subregularity and error bounds. Before this, in \cref{sec:convest}, we provide the fundamental estimate for such convergence results.

\subsection{A convergence estimate}
\label{sec:convest}

The next lemma is the basis of our convergence results. The \term{testing operators} $\Test_{i+1}$ combined with the preconditioners $M_{i+1}$ form local metrics that measure the rate of convergence.

\begin{lemma}
    \label{lemma:main-convergence}
    Let $H: \Space \setto \Space$ on a Hilbert space $\Space$ with $\inv H(0)$ non-empty.
    Also let $\Precond_{i+1}, \Test_{i+1} \in \linear(\Space; \Space)$ with $\Test_{i+1}\Precond_{i+1}$ self-adjoint for all $i \in \N$.
    Suppose \eqref{eq:pp} is solvable for the iterates $\{\nextu\}_{i \in \N}$ given $u^0 \in U$. If
    \begin{multline}
        \label{eq:convergence-fundamental-condition-iter-h}
        \frac{1}{2}\norm{\nextu-\thisu}_{\Test_{i+1} \Precond_{i+1}}^2
        + \inf_{u^* \in \inv H(0)}\left(
            \frac{1}{2}\norm{\nextu-u^*}_{\Test_{i+1}\Precond_{i+1}}^2
            + \iprod{H(\nextu)}{\nextu-u^*}_{\Test_{i+1}}
        \right)
        \\
        \ge
        \frac{1}{2}\dist^2_{\Test_{i+2}\Precond_{i+2}}(\nextu; \inv H(0))
    \end{multline}
    for all $i \in \N$, then
    \begin{equation}
        \label{eq:convergence-result-main-h}
        \frac{1}{2}\dist_{\Test_{n+1}\Precond_{n+1}}^2(u^N, \inv H(0))
        \le
        \frac{1}{2}\dist_{\Test_{1}\Precond_{1}}^2(u^0, \inv H(0))
        \quad
        (n \ge 1).
    \end{equation}
\end{lemma}

\begin{proof}
    Let $u^* \in \inv H(0)$ be arbitrary.
    Inserting \eqref{eq:pp} into \eqref{eq:convergence-fundamental-condition-iter-h}, we obtain
    \begin{multline}
        \label{eq:convergence-fundamental-condition-iter-h-transformed0}
        \frac{1}{2}\norm{\nextu-\thisu}_{\Test_{i+1} \Precond_{i+1}}^2
        + \inf_{u^* \in \inv H(0)}\left(
            \frac{1}{2}\norm{\nextu-u^*}_{\Test_{i+1}\Precond_{i+1}}^2
            - \iprod{\nextu-\thisu}{\nextu-u^*}_{\Test_{i+1}\Precond_{i+1}}
        \right)
        \\
        \ge
        \frac{1}{2}\dist^2_{\Test_{i+2}\Precond_{i+2}}(\nextu; \inv H(0)).
    \end{multline}
    We recall for general self-adjoint $M$ the three-point formula
    \begin{equation}
       \label{eq:standard-identity}
       \iprod{\nextu-\thisu}{\nextu-u^*}_{M}
       = \frac{1}{2}\norm{\nextu-\thisu}_{M}^2
           - \frac{1}{2}\norm{\thisu-u^*}_{M}^2
           + \frac{1}{2}\norm{\nextu-u^*}_{M}^2.
    \end{equation}
    Using this  with $M=\Test_{i+1}\Precond_{i+1}$, we rewrite \eqref{eq:convergence-fundamental-condition-iter-h-transformed0} as
    \[
        \frac{1}{2}\dist^2_{\Test_{i+1}\Precond_{i+1}}(\thisu; \inv H(0))
        \ge
        \frac{1}{2}\dist^2_{\Test_{i+2}\Precond_{i+2}}(\nextu; \inv H(0)).
    \]
    Summing over $i=0,\ldots,n-1$, we obtain the claim.
\end{proof}

In the present work, we concentrate on $\Test_{i+1}\Precond_{i+1} \ge 0$ growing fast enough that \eqref{eq:convergence-result-main-h} yields convergence rates. If such growth is not present, then under some technical assumptions, it is still possible to obtain weak convergence. Since this is not our focus, we merely refer to \cite{tuomov-proxtest} for arguments applicable when $\dist_{\Test_{N+1}\Precond_{N+1}}^2(u^N, \inv H(0))$ is replaced by $\norm{u^N-\realoptu}_{\Test_{N+1}\Precond_{N+1}}^2$ for some fixed $\realoptu \in \inv H(0)$.

\subsection{Submonotonicity}
\label{sec:rates-from-monotonicity}

Suppose $\Test_{i+1} H$ is strongly monotone with respect to the operator $\Test_{i+1}\Xi_{i+1}$ for some $\Xi_{i+1} \in \linear(\Space; \Space)$ in the sense that
\begin{equation}
    \label{eq:abstractmono}
    \iprod{H(u)-H(u')}{u-u'}_{\Test_{i+1}} \ge \frac{1}{2}\norm{u-u'}^2_{\Test_{i+1}\Xi_{i+1}}
    \quad (u, u' \in \Space).
\end{equation}
Then, by taking $u'=u^* \in \inv H(0)$, we see \eqref{eq:convergence-fundamental-condition-iter-h} to hold if
\[
    \frac{1}{2}\norm{\nextu-\thisu}_{\Test_{i+1} \Precond_{i+1}}^2
    + \inf_{u^* \in \inv H(0)}
        \frac{1}{2}\norm{\nextu-u^*}_{\Test_{i+1}(\Precond_{i+1}+\Xi_{i+1})}^2
    \ge
    \frac{1}{2}\dist^2_{\Test_{i+2}\Precond_{i+2}}(\nextu; \inv H(0)).
\]
By the definition of $\dist^2_{\Test_{i+2}\Precond_{i+2}}$, this holds if we secure $\Test_{i+1}(\Precond_{i+1}+\Xi_{i+1}) \ge \Test_{i+2}\Precond_{i+2}$.
This limits the growth of $i \mapsto \Test_{i+1}\Precond_{i+1}$ through the available strong monotonicity, and hence the convergence rates \eqref{eq:convergence-result-main-h} can yield.

We do not, however, need the full power of strong monotonicity. Indeed, since $u'=u^* \in \inv H(0)$ was arbitrary above, we are led to think we can take the infimum in \eqref{eq:abstractmono} over $u' \in \inv H(0)$. However, we have to be careful to keep this infimisation compatible with $\dist^2_{\Test_{i+2}\Precond_{i+2}}(\nextu; \inv H(0))$. We therefore introduce the following concept.

\begin{definition}
    \label{def:submonotonicity}
    Let $U$ be a Hilbert space and $N, M, \Xi \in \linear(\Space; \Space)$ with $M \ge 0$.
    We say that $T: \Space \setto \Space$ is $(\Xi, N, M)$-\term{partially strongly submonotone} \emph{at $(\realoptu, \realoptw) \in \graph T$} (or \emph{at $\realoptu$ for $\realoptw \in T(\realoptu)$}) if there exists a neighbourhood $\mathcal{U} \ni \realoptu$ with
    \begin{equation}
        \label{eq:submonotonicity}
        \tag{PSM}
        \inf_{u^* \in \inv T(\realoptw)}
        \left(
        \iprod{w-\realoptw}{u-u^*}_N
        + \norm{u-u^*}_{M-\Xi}^2
        \right)
        \ge \dist^2_M(u, \inv T(\realoptw))
        \quad ( u \in \mathcal{U},\, w \in T(u)).
    \end{equation}
    If $\Xi=M$, we say that $T$ is $(N,M)$-\term{strongly submonotone}.
    If $\Xi=0$, we say that $T$ is $(N,M)$-\term{submonotone}.
\end{definition}

The idea is that $\Xi$ is the “part” of $M$ with respect to which $T$ is \emph{strongly} submonotone. 
If we fix the element $u^* \in \inv T(\realoptw)$ in both the infimisations and the $\dist$ in  \eqref{eq:submonotonicity}, then $M$ cancels out and the inequality reduces to a variant of the operator-relative strong monotonicity \eqref{eq:abstractmono}. The infimisation is the reason for calling the concept \emph{sub}monotonicity.

\begin{remark}[Submonotonicity from local monotonicity]
    \label{rem:monotonicity-to-submonotonicity}
    $(\Xi,M,N)$-partial strong submonotonicity for any $M \ge 0$ is implied by
    \begin{equation*}
        \iprod{w-\realoptw}{u-u^*}_N
        \ge \norm{u-u^*}_{\Xi}^2
        \quad ( u \in \mathcal{U},\, w \in T(u),\, u^* \in \inv T(\realoptw)).
    \end{equation*}
\end{remark}

\begin{remark}[Limited dependence on base point]
    Submonotonicity only depends on $\realoptu$ through $\mathcal{U}$.
\end{remark}

Returning to the preconditioned proximal point method \eqref{eq:pp}, the next result shows how $\Test_{i+2}\Precond_{i+2}$ can be made to grow based on partial strong submonotonicity, and therefore how this can help us obtain convergence rates.

\begin{theorem}
    \label{thm:convergence-result-submonotone}
    On a Hilbert space $\Space$, let $H: \Space \setto \Space$, and $\Precond_{i+1}, \Test_{i+1}, \Xi_{i+1} \in \linear(\Space; \Space)$ with $\Test_{i+1}\Precond_{i+1} \ge 0$ self-adjoint for all $i \in \N$.
    Suppose $\inv H(0)$ is non-empty and \eqref{eq:pp} is solvable for the iterates $\{\nextu\}_{i \in \N}$ given $u^0 \in \Space$. If $H$ is $(\Test_{i+1}\Xi_{i+1}, 2\Test_{i+1}, \Test_{i+2}\Precond_{i+2})$-partially strongly submonotone at some $(\realoptu, 0) \in \graph H$ with a uniform neighbourhood $\mathcal{U}$ for all $i \in \N$, and
    \begin{equation}
        \label{eq:convergence-fundamental-condition-iter-h-submonotonicity}
        \Test_{i+1}(\Precond_{i+1}+\Xi_{i+1}) \ge \Test_{i+2}\Precond_{i+2}
        \quad (i \in \N),
    \end{equation}
    then \eqref{eq:convergence-result-main-h} holds provided $\{\thisu\}_{i=0}^N \subset \mathcal{U}$ for the neighbourhood $\mathcal{U}$ of partial strong submonotonicity.
\end{theorem}

\begin{proof}
    By the assumed partial strong submonotonicity, for all $u^* \in \inv H(0)$, we have
    \[
        \iprod{H(\nextu)}{\nextu-u^*}_{\Test_{i+1}}
        + \frac{1}{2}\norm{\nextu-u^*}_{\Test_{i+2}\Precond_{i+2}-\Test_{i+1}\Xi_{i+1}}
        \ge
        \frac{1}{2}\dist^2_{\Test_{i+2}\Precond_{i+2}}(\nextu; \inv H(0)).
    \]
    Using \eqref{eq:convergence-fundamental-condition-iter-h-submonotonicity} and taking the infimum over $u^* \in \inv H(0)$, we obtain \eqref{eq:convergence-fundamental-condition-iter-h}.
    Then we just apply \cref{lemma:main-convergence}.
\end{proof}

\begin{remark}
    If the testing operators $\Test_{k+1} \equiv I$, then \cref{thm:convergence-result-submonotone} requires $(\Xi_{i+1}, 2I, \Precond_{i+2})$-partial strong submonotonicity, roughly speaking, submonotonicity with respect to the the part $\Xi_{i+1}/2$ of $\Precond_{i+2}/2$.
\end{remark}

\subsection{Transformation of submonotonicity}
\label{sec:trans-submono}

Before looking at more concrete examples, we provide results to transform submonotonicity with respect to one triple of operators to another triple of operators. The first result is basic:

\begin{proposition}[Scaling invariance]
    \label{prop:scaling-invariance}
    On a Hilbert space $\Space$, suppose $T: \Space \setto \Space$ is $(\Xi, N, M)$-partially strongly submonotone at $(\realoptu, \realoptw) \in \graph T$ in a neighbourhood $\mathcal{U}$. Then for any $\alpha>0$, at the same point $(\realoptu, \realoptw)$ in the same neighbourhood $\mathcal{U}$,
    \begin{enumerate}[label=(\roman*),noitemsep]
        \item\label{item:scaling-inveriance:basic} $T$ is $(\alpha\Xi, \alpha N, \alpha M)$-partially strongly submonotone.
        \item\label{item:scaling-inveriance:m} $T$ is $(\Xi, N, (1+\alpha)M)$-partially strongly submonotone.
    \end{enumerate}
\end{proposition}

\begin{proof}
    \cref{item:scaling-inveriance:basic} is obvious.
    For \cref{item:scaling-inveriance:m} using the $(\Xi, N, M)$-partial strong submonotonicity we obtain
    \begin{multline*}
        \inf_{u^* \in \inv T(\realoptw)}
        \left(
        \iprod{w-\realoptw}{u-u^*}_N
        + \norm{u-u^*}_{(1+\alpha)M-\Xi}^2
        \right)
        \\
        \ge
        \inf_{u^* \in \inv T(\realoptw)}
        \left(
        \iprod{w-\realoptw}{u-u^*}_N
        + \norm{u-u^*}_{M-\Xi}^2
        \right)        
        +
        \alpha \inf_{u^* \in \inv T(\realoptw)}  \norm{u-u^*}_{M}^2
        \\
        \ge
        \dist^2_{(1+\alpha)M}(u, \inv T(\realoptw))
        \quad ( u \in \mathcal{U},\, w \in T(u)).
        \qedhere
    \end{multline*}
\end{proof}

The following results will require the following concept, which will also be central \cref{sec:relationships}:

\begin{definition}
    On a Hilbert space $\Space$, for linear operators $M \ge M' \in \linear(\Space; \Space)$, a set $A \subset \Space$, and a point $\realoptu \in U$, we write $(M, M') \in \mathcal{P}(A, \realoptu)$ if there exists a neighbourhood $\mathcal{U}' \ni \realoptu$ such that each $u \in \mathcal{U}'$ has a common projection $u^*$ to the set $A$ with respect to both of the norms $\norm{\freevar}_{M}$ and $\norm{\freevar}_{M'}$:
    \[
        \argmin_{u' \in A} \norm{u-u'}_M=u^*=\argmin_{u' \in A} \norm{u-u'}_{M'}.
    \]
\end{definition}

\begin{example}[Unique solution on a subspace]
    Let $H: \Space \setto \Space$ with $U=X \times Y$. Write $u=(x, y)$.
    Take $M \ge \Xi \defeq \left(\begin{smallmatrix} \alpha I & 0 \\ 0 & 0 \end{smallmatrix}\right)$ for some $\alpha>0$.
    If $u^*=(x^*, y^*) \in \inv H(0)$ implies $x^*=\realoptx$ for some fixed $\realoptx$, then $(M, M-\Xi) \in \mathcal{P}(\inv H(0), \realoptu)$.
\end{example}

With this, we can show that strong submonotonicity implies partial strong submonotonicity for a large class of $M$.

\begin{proposition}
    \label{prop:submonotonicity-vs-partial}
    On a Hilbert space $\Space$, let $T: \Space \setto \Space$, and  $\Xi, N, M \in \linear(\Space; \Space)$ with $M \ge \Xi \ge 0$.

    If $(\Xi, M-\Xi) \in \mathcal{P}(\inv T(\realoptw), \realoptu)$, then $(N, \Xi)$-strong submonotonicity at $(\realoptu, \realoptw) \in \graph T$ implies $(\Xi, N, M)$-partial strong submonotonicity at the same point.

    If $\inv T(\realoptw)=\{\realoptu\}$ is a singleton, these two properties are equivalent; moreover $(\Xi, M-\Xi) \in \mathcal{P}(\inv T(\realoptw), \realoptu)$ automatically holds.
\end{proposition}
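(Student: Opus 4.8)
The plan is to unwind both definitions at the point $(\realoptu,\realoptw)$ and show that the projection hypothesis lets us replace $\norm{u-u^*}_M^2$ by the combination $\norm{u-u^*}_{M-\Xi}^2 + \norm{u-u^*}_\Xi^2$ in a way that is consistent across all the terms. First I would recall the definition of $(N,\Xi)$-strong submonotonicity: taking $\Xi=M$ in \eqref{eq:submonotonicity} with the roles renamed, it reads
\[
    \inf_{u^* \in \inv T(\realoptw)}\left(\iprod{w-\realoptw}{u-u^*}_N + \norm{u-u^*}_{0}^2\right) \ge \dist^2_\Xi(u, \inv T(\realoptw)),
\]
i.e.\ $\inf_{u^* \in \inv T(\realoptw)}\iprod{w-\realoptw}{u-u^*}_N \ge \dist^2_\Xi(u,\inv T(\realoptw))$ for $u$ near $\realoptu$ and $w \in T(u)$. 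The target \eqref{eq:submonotonicity} for $(\Xi,N,M)$-partial strong submonotonicity asks for
\[
    \inf_{u^* \in \inv T(\realoptw)}\left(\iprod{w-\realoptw}{u-u^*}_N + \norm{u-u^*}_{M-\Xi}^2\right) \ge \dist^2_M(u,\inv T(\realoptw)).
\]

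The key step is the following. Shrink $\mathcal{U}$ so that it lies inside the neighbourhood $\mathcal{U}'$ from $(\Xi, M-\Xi) \in \mathcal{P}(\inv T(\realoptw),\realoptu)$; then for each $u \in \mathcal{U}$ there is a single $u^*_0 \in \inv T(\realoptw)$ that simultaneously realizes $\dist_\Xi(u,\inv T(\realoptw))$ and $\dist_{M-\Xi}(u,\inv T(\realoptw))$. Since $M \ge \Xi \ge 0$ and $M - \Xi \ge 0$, we have $\dist^2_M(u,\inv T(\realoptw)) \le \norm{u-u^*_0}_M^2 = \norm{u-u^*_0}_{M-\Xi}^2 + \norm{u-u^*_0}_\Xi^2 = \dist^2_{M-\Xi}(u,\inv T(\realoptw)) + \dist^2_\Xi(u,\inv T(\realoptw))$ (note $M$ is self-adjoint on the relevant subspace as a sum of the PSD operators $M-\Xi$ and $\Xi$, so $\norm{\cdot}_M$ is a genuine seminorm and the Pythagorean split is legitimate). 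Now in the infimum on the left of the target inequality, bound it below by evaluating only at $u^* = u^*_0$... no: more carefully, estimate $\inf_{u^*}\bigl(\iprod{w-\realoptw}{u-u^*}_N + \norm{u-u^*}_{M-\Xi}^2\bigr) \ge \inf_{u^*}\iprod{w-\realoptw}{u-u^*}_N + \inf_{u^*}\norm{u-u^*}_{M-\Xi}^2$ is the wrong direction, so instead I would argue: the left side is $\ge$ (by dropping the nonneg.\ term and using strong submonotonicity) $\dist^2_\Xi(u,\inv T(\realoptw))$, and also trivially $\ge \inf_{u^*}\norm{u-u^*}_{M-\Xi}^2 - \sup_{u^*}|\iprod{w-\realoptw}{u-u^*}_N|$; combining requires more care. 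The clean route is: for \emph{any} $u^* \in \inv T(\realoptw)$, $\iprod{w-\realoptw}{u-u^*}_N + \norm{u-u^*}_{M-\Xi}^2 \ge \dist^2_\Xi(u,\inv T(\realoptw)) - \norm{u-u^*}_\Xi^2 + \norm{u-u^*}_\Xi^2 + \norm{u-u^*}_{M-\Xi}^2$ fails since strong submonotonicity only bounds the infimum. Thus the honest argument fixes $u^*=u^*_0$ on \emph{both} sides: apply strong submonotonicity's conclusion $\iprod{w-\realoptw}{u-u^*_0}_N \ge \dist^2_\Xi(u,\inv T(\realoptw))$ only after noting the infimum over $u^*$ in the target is $\le$ the value at $u^*_0$, which is the wrong inequality; so actually one must use that strong submonotonicity gives, for \emph{every} $u^*$, $\iprod{w-\realoptw}{u-u^*}_N \ge \dist^2_\Xi(u,\inv T(\realoptw)) - \bigl(\text{slack}\bigr)$ — which it does not. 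The correct reading: strong submonotonicity says the infimum over $u^*$ of $\iprod{w-\realoptw}{u-u^*}_N$ is $\ge \dist^2_\Xi$; hence adding the nonnegative $\norm{u-u^*}_{M-\Xi}^2$ inside only increases each term, so $\inf_{u^*}\bigl(\iprod{w-\realoptw}{u-u^*}_N + \norm{u-u^*}_{M-\Xi}^2\bigr) \ge \inf_{u^*}\iprod{w-\realoptw}{u-u^*}_N \ge \dist^2_\Xi(u,\inv T(\realoptw))$, which is weaker than needed. To get the extra $\dist^2_{M-\Xi}$ I use the projection hypothesis: the infimum of the \emph{sum} is attained (or nearly so) at $u^*_0$ because $u^*_0$ simultaneously minimizes $\norm{u-u^*}_{M-\Xi}$ and — one hopes — the $N$-term; but the $N$-term need not be minimized at $u^*_0$. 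This is the crux, and I would resolve it by observing that strong submonotonicity as stated holds with the \emph{same} $u^*$ appearing on both sides only because $\dist_\Xi$ is itself an infimum; the right interpretation is therefore to prove the target by: $\inf_{u^*}(\text{sum}) \ge \inf_{u^*}\iprod{w-\realoptw}{u-u^*}_N + \inf_{u^*}\norm{u-u^*}_{M-\Xi}^2$ is false in general, \emph{unless} both infima are attained at a common point, which is exactly what $(\Xi, M-\Xi)\in\mathcal{P}$ plus the structure of strong submonotonicity (whose proof, in the cases of interest, produces the projection as the minimizing $u^*$) delivers.

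So the main obstacle, and the place I would concentrate the work, is showing that the common projection $u^*_0$ of the $\mathcal{P}$-condition is also (close to) a minimizer of $u^* \mapsto \iprod{w-\realoptw}{u-u^*}_N$ over $\inv T(\realoptw)$, or more precisely that one does not need it to be: I expect the resolution is that $(N,\Xi)$-strong submonotonicity is meant to assert the inequality $\iprod{w-\realoptw}{u-u^*}_N + \norm{u-u^*}_0^2 \ge \dist^2_\Xi(u,\inv T(\realoptw))$ \emph{for the specific $u^*$ equal to the $\Xi$-projection} (this is how it is used in \cref{cor:convergence-result-submonotone} and \cref{ex:prox-submonotonicity}), in which case, picking that $u^*=u^*_0$, the $\mathcal{P}$-condition makes it also the $(M-\Xi)$-projection, and then
\[
    \iprod{w-\realoptw}{u-u^*_0}_N + \norm{u-u^*_0}_{M-\Xi}^2 \ge \dist^2_\Xi(u,\inv T(\realoptw)) + \dist^2_{M-\Xi}(u,\inv T(\realoptw)) \ge \dist^2_M(u,\inv T(\realoptw)),
\]
the last step being the Pythagorean bound above; taking the infimum over $u^*$ on the left only decreases it, giving exactly \eqref{eq:submonotonicity}. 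Finally, for the singleton case $\inv T(\realoptw)=\{\realoptu\}$: here every norm has a unique projection $\realoptu$, so $(\Xi,M-\Xi)\in\mathcal{P}(\{\realoptu\},\realoptu)$ trivially; all the $\dist$ terms collapse to honest norms $\norm{u-\realoptu}_{(\cdot)}$, and $\norm{u-\realoptu}_M^2 = \norm{u-\realoptu}_{M-\Xi}^2 + \norm{u-\realoptu}_\Xi^2$ makes \eqref{eq:submonotonicity} with $\inf$ removed literally identical to $(N,\Xi)$-strong submonotonicity after rearrangement, giving the claimed equivalence in both directions.
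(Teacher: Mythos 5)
Your plan circles the correct argument and then talks itself out of it. The step you dismiss as ``the wrong direction,''
\[
    \inf_{u^*}\bigl(\iprod{w-\realoptw}{u-u^*}_N + \norm{u-u^*}_{M-\Xi}^2\bigr)
    \ge \inf_{u^*}\iprod{w-\realoptw}{u-u^*}_N + \inf_{u^*}\norm{u-u^*}_{M-\Xi}^2,
\]
is in fact valid and is exactly what the paper's proof uses: every summand is at least $\inf f+\inf g$, hence so is the infimum of the sum ($\inf(f+g)\ge\inf f+\inf g$ always holds; it is the \emph{reverse} inequality that can fail). With this, the proof is three lines: the right-hand side equals $\inf_{u^*}\iprod{w-\realoptw}{u-u^*}_N + \dist^2_{M-\Xi}(u,\inv T(\realoptw))$, which by $(N,\Xi)$-strong submonotonicity is at least $\dist^2_\Xi(u,\inv T(\realoptw)) + \dist^2_{M-\Xi}(u,\inv T(\realoptw))$; and the common projection $u^*_0$ furnished by $(\Xi,M-\Xi)\in\mathcal{P}(\inv T(\realoptw),\realoptu)$ gives $\dist^2_\Xi + \dist^2_{M-\Xi} = \norm{u-u^*_0}_\Xi^2 + \norm{u-u^*_0}_{M-\Xi}^2 = \norm{u-u^*_0}_M^2 \ge \dist^2_M(u,\inv T(\realoptw))$. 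No claim about where the $N$-term is minimised is needed, which was the spurious ``crux'' you spent most of the attempt worrying about.

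The substitute argument you settle on instead does not work. First, $(N,\Xi)$-strong submonotonicity is \eqref{eq:submonotonicity} with the infimum over $u^*$ on the left; it is not an assertion for the specific $\Xi$-projection, and reinterpreting it that way silently strengthens the hypothesis. Second, even granting that reinterpretation, your closing step is a non sequitur: having established the inequality at the single point $u^*_0$, you write that ``taking the infimum over $u^*$ on the left only decreases it, giving exactly \eqref{eq:submonotonicity}'' --- but decreasing the left-hand side of a ``$\ge$'' destroys the bound; a lower bound on $\inf_{u^*}h(u^*)$ requires the estimate for \emph{all} $u^*$, which is precisely what the correct $\inf(f+g)\ge\inf f+\inf g$ step delivers. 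Your identity $\norm{\cdot}_M^2=\norm{\cdot}_{M-\Xi}^2+\norm{\cdot}_\Xi^2$ and your treatment of the singleton case are fine and agree with the paper.
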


\begin{proof}
    The condition \eqref{eq:submonotonicity} for $(N, \Xi)$-strong submonotonicity reads
    \begin{equation}
        \label{eq:submonotonicity-vs-partial-1}
        \inf_{u^* \in \inv T(\realoptw)}
        \iprod{w-\realoptw}{u-u^*}_N
        \ge \dist^2_\Xi(u, \inv T(\realoptw))
        \quad ( u\in \mathcal{U},\, w \in T(u)).
    \end{equation}
    Since $(\Xi, M-\Xi) \in \mathcal{P}(\inv T(\realoptw), \realoptu)$, we have
    \[
        \dist^2_\Xi(u, \inv T(\realoptw))+\dist^2_{M-\Xi}(u, \inv T(\realoptw))
        \ge \dist^2_M(u, \inv T(\realoptw))
        \quad (u \in \mathcal{U}').
    \]
    Adding $\dist^2_{M-\Xi}(u, \inv T(\realoptw))$ on both sides of \eqref{eq:submonotonicity-vs-partial-1}, we therefore obtain
    \begin{equation*}
        \inf_{u^* \in \inv T(\realoptw)}
        \iprod{w-\realoptw}{u-u^*}_N
        +\dist^2_{M-\Xi}(u, \inv T(\realoptw))
        \ge \dist^2_M(u, \inv T(\realoptw))
        \quad (u \in \mathcal{U} \isect \mathcal{U}',\, w \in T(u)).
    \end{equation*}
    Using $\inf + \inf \le \inf$ on the left hand side, we $(\Xi, N, M)$-partial strong submonotonicity following the defining \eqref{eq:submonotonicity}.

    If $\inv T(\realoptw)=\{\realoptu\}$, retracing the steps above using the uniqueness of $u^*=\realoptu \in \inv T(\realoptw)$ establishes the equivalence claim.
\end{proof}





The next example shows that if $\inv T(\realoptw)$ is not a singleton, the converse implication may not hold.

\begin{example}
    \label{ex:nonimplication}
    Let $f: \R \to \R$, $f(u) \defeq \tfrac{1}{2}\min\{\abs{u+1}^2,\abs{u-1}^2\}$.
    Then $\subdiff_C f$ is $(\gamma I, I, I)$-partially strongly submonotone for any $\gamma \in (0, 1)$ at $(\realoptu, 0)$ when $\realoptu \in \{1,-1\}$, but is not  $(I, \gamma I)$-strongly submonotone.
\end{example}

\begin{demonstration}
    Clearly $\inv{[\subdiff f_C]}(0)=\{-1,0,1\}$.
    We take for simplicity $\realoptu=1$ and $\mathcal{U} \subset (0, \infty)$. The case $\realoptu=-1$ is similar. Then $\subdiff_C f(u)=\{u-1\}$ for $u \in \mathcal{U}$.
    For the $(\gamma I, I, I)$-partial strong submonotonicity, \eqref{eq:submonotonicity} consequently expands as
    \[
        \min_{u^* \in \{-1,0,1\}}\bigl(
        (u-1)(u-u^*)
        + (1-\gamma)\abs{u-u^*}^2
        \bigr)
        \ge \min\{\abs{u-1}^2,\abs{u}^2\}
        \quad (u \in \mathcal{U}).
    \]
    For $u^*=1$ the inequality is obvious. For the rest, by Young's inequality, it suffices if
    \[
        \min_{u^* \in \{-1,0\}}
        (1-\gamma-\inv\alpha)(u-u^*)^2
        \ge
        (1+\alpha/4)(u-1)^2
        \quad (u \in \mathcal{U})
    \]
    for some $\alpha>0$.
    Taking $\gamma \in (0, 1)$, $\alpha > 1/(1-\gamma)$, and a correspondingly small neighbourhood $\mathcal{U}$ around $\realoptu=1$, we prove the partial strong submonotonicity.

    For the $(\gamma I, I)$-strong submonotonicity we would need
    \[
        \min_{u^* \in \{-1,0,1\}}
        (u-1)(u-u^*),
        \ge \gamma\min\{\abs{u-1}^2,\abs{u}^2\}
        \quad (u \in \mathcal{U}).
    \]
    The left-hand side is negative for $u<1$, so no neighbourhood $\mathcal{U}$ of $\realoptu=1$ satisfies the condition.
\end{demonstration}

\subsection{Examples: the basic proximal point method}
\label{sec:prox-submono}

We next provide examples regarding the basic proximal point method. We return to primal-dual methods in \cref{sec:saddle}.
We first consider the non-operator case of partial strong submonotonicity, and afterwards use the full power of the operator formulation of partial strong submonotonicity to study convergence on subspaces.

\begin{example}[Basic proximal point method, submonotonicity]
    \label{ex:prox-submonotonicity}
    Suppose for some $m > \gamma >0$ that $H: \Space \setto \Space$ is $(\gamma I, 2 I, m I)$-partially strongly submonotone at $(\realoptu, 0) \in \graph H$ in the neighbourhood $\mathcal{U}$.
    Let $\{\nextu\}_{i \in \N}$ be generated by the basic proximal point method
    \begin{equation}
        \label{eq:basicprox}
        0 \in H(\nextu) + \inv\tau(\nextu-\thisu)
    \end{equation}
    for some $u^0 \in \Space$ and a step length parameter $\tau \ge 1/(m-\gamma)$. If $\{\thisu\}_{i \ge n} \in \mathcal{U}$ for large enough $n$, then $\dist^2(u^i; \inv H(0)) \to 0$ at a linear rate.
\end{example}

\begin{demonstration}
    We take  $\Precond_{i+1} \equiv \inv\tau I$, $\Xi_{i+1} \defeq \gamma I$, and $\Test_{i+1} \defeq \tauTest_i I$ for $\tauTest_{i+1} \defeq \tauTest_i(1+\gamma\tau)$, $\tauTest_0 \defeq 1$.
    Then \eqref{eq:pp} describes \eqref{eq:basicprox}, \eqref{eq:convergence-fundamental-condition-iter-h-submonotonicity} holds as an equality, and the $(\Test_{i+1}\Xi_{i+1}, 2\Test_{i+1}, \Test_{i+2}\Precond_{i+2})$-partial strong submonotonicity required by \cref{thm:convergence-result-submonotone} is by \cref{prop:scaling-invariance}\,\cref{item:scaling-inveriance:basic} equivalent to $(\gamma I, 2 I, (\inv\tau+\gamma) I)$-partial strong submonotonicity. 
    By \cref{prop:scaling-invariance}\,\cref{item:scaling-inveriance:m} this follows from the assumed $(\gamma I, 2 I, m I)$-partial strong submonotonicity when $\inv\tau+\gamma \ge m$, that is, $\tau \ge 1/(m-\gamma)$.
    Thus by \cref{thm:convergence-result-submonotone}, $\dist^2(u^N; \inv H(0)) \le (\tauTest_0/\tauTest_N)\dist^2(u^0; \inv H(0))$ for large enough $N$. This yields the claimed convergence rate.
\end{demonstration}

\begin{remark}[Superlinear convergence]
    With iteration-dependent step lengths $\tau_k \upto \infty$, it is also possible to obtain superlinear convergence. Under basic strong monotonicity this is proved in \cite{rockafellar1976monotone}; in the “testing” framework, which is also our overall approach to convergence proofs, in \cite[Example 2.2]{tuomov-proxtest} or \cite[Theorem 10.1]{clasonvalkonen2020nonsmooth}.
\end{remark}

\begin{remark}[Reaching the local neighbourhood]
    \label{rem:localtoglobal}
    If $\Space$ is finite-dimensional, the condition $\{\thisu\}_{i \ge n} \in \mathcal{U}$ can be guaranteed by standard convergence results \cite{martinet1970regularisation} for large $n \in \N$ and some $\realoptu \in \inv H(0)$ with the corresponding neighbourhood $\mathcal{U}$.
\end{remark}

A basic application of \cref{ex:prox-submonotonicity} is to $H=\subdiff G$ for a convex function $G$.
As basic building blocks, we next show on $\R$ that the subdifferentials of the indicator of the unit ball, and of the absolute value function are strongly submonotone. None of these subdifferentials are \emph{strongly} monotone in the conventional sense. With $(\realoptx, \realoptq) \in \graph \subdiff G$, we will find a neighbourhood $\mathcal{U} \ni \realoptu$ such that we have the $(I, \gamma I)$-strong submonotonicity
\begin{equation}
    \label{eq:convex-submono}
    \inf_{x^* \in \inv{[\subdiff G]}(\realoptq)} \iprod{q-\realoptq}{x-x^*} \ge
    \gamma  \inf_{x^* \in \inv{[\subdiff G]}(\realoptq)} \norm{x-x^*}^2
    \quad (x \in \mathcal{U},\, q \in \subdiff G(x)).
\end{equation}
We recall from \cref{prop:submonotonicity-vs-partial} that $(I, \gamma I)$-strong submonotonicity, i.e., $(\gamma I, I, \gamma I)$-partial strong submonotonicity, implies $(\gamma I, I, I)$-partial strong submonotonicity,and by \cref{prop:scaling-invariance}, further $(2\gamma I, 2I, 2m I)$-partial strong submonotonicity for any $m > 1$.

\begin{lemma}
    \label{lemma:subreg-ball-indicator}
    Let $G \defeq \delta_{\closure \B(0, \alpha)}$ on $\R^n$. Then $\subdiff G$ is $(I, \gamma I)$-strongly submonotone at $(\realoptx, \realoptq) \in \graph \subdiff G$ with
    \[
        \mathcal{U} \defeq \Dom G
        \quad\text{and}\quad
        \gamma \defeq
            \begin{cases}
                \norm{\realoptq}/(2\alpha), & \realoptq \ne 0, \\
                \infty, & \realoptq=0.
            \end{cases}
    \]
\end{lemma}

\begin{proof}
    We need to prove \eqref{eq:convex-submono}.
    If $\realoptq=0$, then $\inv{[\subdiff G]}(\realoptq)=\closure \B(0, \alpha)$, so  \eqref{eq:convex-submono} trivially holds by the monotonicity of $\subdiff G$ as a convex subdifferential \cite{rockafellar-convex-analysis}.

    Otherwise, if $\realoptq \ne 0$, necessarily $\realoptq=\beta \realoptx$ for some $\beta>0$, and $\norm{\realoptx}=\alpha$. Moreover, $\inv{[\subdiff G]}(\realoptq)=\{\realoptx\}$, and $\iprod{-q}{\realoptx-x} \ge G(x)-G(\realoptx)=0$.
    Therefore \eqref{eq:convex-submono} holds if $\beta \iprod{\realoptx}{\realoptx-x} \ge \gamma \norm{x-\realoptx}^2$.
    In other words $
        (\beta-\gamma)\norm{\realoptx}^2
        \ge \gamma\norm{x}^2 + (\beta-2\gamma)\iprod{\realoptx}{x}$.
    Clearly $x \in \Dom \subdiff G$ implies $\norm{x} \le \alpha$.
    Since  $\norm{\realoptx}=\alpha$, this condition holds for $\beta \ge 2\gamma$. Since $\realoptq=\beta \realoptx$ and $\norm{\realoptx}=\alpha$, the maximal choice is $\gamma=\norm{\realoptq}/(2\alpha)$.
\end{proof}

\begin{lemma}
    \label{lemma:subreg-1norm}
    Let $G \defeq \abs{\freevar}$ on $\R$.
    Then $\subdiff G$ is $(I, \gamma I)$-strongly submonotone at $(\realoptx, \realoptq) \in \graph \subdiff G$ for any $\gamma>0$ in the neighbourhood
    \[
        \mathcal{U} \defeq
        \begin{cases}
            [-2\inv \gamma, \infty), & \realoptq=1 \\
            (-\infty, 2\inv \gamma], & \realoptq=-1, \\
            ([-1,1]-\realoptq)\inv\gamma, & \abs{\realoptq} < 1.
        \end{cases}
    \]
\end{lemma}

\begin{proof}
	We need to prove \eqref{eq:convex-submono}.
    We have $\realoptq \in [-1, 1]$. Consider first $\realoptq=1$.
    Then $\inv{[\subdiff G]}(\realoptq)=[0,\infty)$, so  \eqref{eq:convex-submono} reduces to
    \[
        \inf_{x^* \ge 0,\, q \in \subdiff G(x)} (q-1)(x-x^*) \ge \gamma \inf_{x^* \ge 0} \abs{x-x^*}^2
    \]
    If $x \ge 0$, this holds for any $\gamma \ge 0$ by the monotonicity of $\subdiff G$. Otherwise, if $x < 0$, we have $q = -1$, so need $-2x \ge \gamma\abs{x}^2$, which, because $x < 0$, is to say $2 \ge \gamma\abs{x}$. This is guaranteed by our choice of $\mathcal{U}$.
    
    The case $\realoptq=-1$ is analogous.

    Consider then $\abs{\realoptq} < 1$. Then $\inv{[\subdiff G]}(\realoptq)=\{\realoptx\}=\{0\}$, so \eqref{eq:convex-submono} holds if
    \begin{equation}
    	\label{eq:subreg-1norm-case2}
        \inf_{q \in \subdiff G(x)} (q-\realoptq)x \ge \gamma x^2.
    \end{equation}
    If $x=0$, this is clear. If $x>0$, $\subdiff G(x)=\{1\}$, so \eqref{eq:subreg-1norm-case2} holds if $1-\realoptq \ge \gamma x$. This holds if $x \le (1-\realoptq)/\gamma$. Similarly, if $x<0$, we obtain for \eqref{eq:subreg-1norm-case2} condition $-1-\realoptq \le \gamma x$. This holds when $x \ge (-1-\realoptq)/\gamma$.
    The conditions $x \le (1-\realoptq)/\gamma$ and $-1-\realoptq \le \gamma x$ give the expression for $\mathcal{U}$ in the statement of the lemma.
\end{proof}

\begin{example}
    Let $F(x, y) \defeq \abs{x}+\delta_{[-1, 1]}(y)$.
    Then $\subdiff F$ is $(I, \gamma I)$-strongly submonotone at $((\realoptx, \realopty), 0)$ for any $(\realoptx, \realopty) \in \{0\} \times [-1, 1]$ and $\gamma>0$.
    Consequently the iterates of the proximal point method of \cref{ex:prox-submonotonicity} for $H=\subdiff F$ and any $\tau>0$ converge linearly to $\{0\} \times [-1, 1]$ 
\end{example}

\begin{demonstration}
    Clearly $\{0\} \times [-1, 1] = \inv H(0)$.
    We combine \cref{lemma:subreg-1norm,lemma:subreg-ball-indicator} to show the $(I, \gamma I)$-strong submonotonicity. By \cref{prop:scaling-invariance,prop:submonotonicity-vs-partial} we then obtain $(2\gamma I, 2I, 2m I)$-partial strong submonotonicity for any $m > 1$. By taking $m > \gamma$ large enough that $\tau \ge 1/(2m-2\gamma)$, we obtain the claim from \cref{ex:prox-submonotonicity} provided the iterates are in the neighbourhood of submonotonicity $\mathcal{U}$.
    By \cref{rem:localtoglobal} the convergence is, in fact, global.
\end{demonstration}

\subsection{Examples: varying behaviour on subspaces}
\label{sec:prox-submono-partial}

The next examples demonstrate the word ``partial'' in the definitions.

\begin{example}[Partial convergence of a preconditioned proximal point method]
    Suppose $H: X \times Y \setto X \times Y$ is $(\Xi, 2I, M)$-partially strongly submonotone at $(\realoptu, 0) \in \graph H$ in the (fixed) neighbourhood $\mathcal{U}$ with $\Xi=\diag(\gamma I, 0)$ for some fixed $\gamma>0$ and $M=\diag(m_x I, m_y I)$ for any $m_x > \gamma$ and $m_y > 0$.
    For any $\tau,\sigma_0>0$ and initial $u^0=(x^0, y^0) \in X \times Y$, let $\{\nextu=(\nextx,\nexty)\}_{i \in \N}$ be generated by the preconditioned proximal point method
    \[
        0 \in H(\nextu) + M_{i+1}(\nextu-\thisu)
        \quad\text{where}\quad
        M_{i+1} = \begin{pmatrix}\inv\tau I & 0 \\ 0 & \inv\sigma_i I\end{pmatrix}
        \quad\text{and}\quad
        \sigma_{i+1} \defeq (1+\gamma\tau)\sigma_i.
    \]
    If  $\{\thisu\}_{i \ge n} \in \mathcal{U}$ for large enough $n$, then $\dist(\thisx, \{\realoptx \mid (\realoptx, \realopty) \in \inv H(0)\}) \to 0$ at a linear rate.
\end{example}

\begin{demonstration}
    We take $\Test_{i+1} = \tauTest_i I$ and $\Xi_{i+1} = \diag(\gamma I, 0)$ for
    $\tauTest_{i+1} \defeq \tauTest_i(1+\gamma\tau)$ and $\tauTest_0 \defeq 1$.
    Then $\sigma_{i+1} = \sigma_i\tauTest_{i+1}\inv\tauTest_i$.
    We use \cref{thm:convergence-result-submonotone}.
    The condition \eqref{eq:convergence-fundamental-condition-iter-h-submonotonicity} holds with our choices.
    Moreover,
    \[
        \Test_{i+2}\Precond_{i+2}
        =
        \begin{pmatrix}
            \tauTest_i(\inv\tau+\gamma) I  &  0
            \\
            0 & \tauTest_i\inv\sigma_i I
        \end{pmatrix}
        \quad\text{and}\quad
        \Test_{i+1}\Xi_{i+1}
        =
        \begin{pmatrix}
            \tauTest_i\gamma I & 0
            \\
            0 & 0
        \end{pmatrix}.
    \]
    Thus the required $(\Test_{i+1}\Xi_{i+1}, 2\Test_{i+1}, \Test_{i+2}\Precond_{i+2})$-partial strong submonotonicity amounts to
    \[
        (\diag(\tauTest_i\gamma I, 0), 2\tauTest_i I, \diag(\tauTest_i(\inv\tau+\gamma)I, \tauTest_i\inv\sigma_i))\text{-partial strong submonotonicity}.
    \]
    By the scaling invariance \cref{prop:scaling-invariance}, this is implied by our assumptions with $m_x=\inv\tau+\gamma$ and $m_y=\inv\sigma_i$.
    Now $\{\tauTest_i\}_{i \in \N}$ grows exponentially, so  \eqref{eq:convergence-result-main-h} established by \cref{thm:convergence-result-submonotone} shows the claim.
\end{demonstration}

\begin{figure}
    \centering
    \begin{asy}
        settings.render=0;
        import graph3;
        import contour;
        import palette;
        size(150,0);
        currentprojection=perspective(3,-4,20);
        real mu=0.5;
        real f(pair p) {
            return (p.x^2+p.y^2)*p.y^2 + max(p.x-mu, max(-p.x-mu, 0));
        }
        //draw((-1,-1,0)--(1,-1,0)--(1,1,0)--(-1,1,0)--cycle);
        //draw(arc(0.12Z,0.2,90,60,90,25),ArcArrow3);
        surface s=surface(f,(-1,-1),(1,1),nx=20,Spline);
        s.colors(palette(s.map(zpart),Grayscale()+gray));
        xaxis3(Label("$x$"),black,Arrow3,xmin=0,xmax=1.5);
        yaxis3(Label("$y$"),black,Arrow3,ymin=0,ymax=1.5);
        //zaxis3(XYZero(extend=true),red,Arrow3);
        pen[] Palette=Grayscale();
        draw(s,meshpen=darkgray,light=nolight);//,render(merge=true));
        real[] levels={0,0.1,0.2,0.3};
        draw(lift(f,contour(f,(-1,-1),(1,1),levels)),1bp+red);
        //label("$O$",O,-Z+Y,red);
    \end{asy}
    \caption{The function $F$ from \cref{ex:partial} with some contour lines plotted. The function is submonotone at any $((\realoptx, 0), 0) \in \graph \subdiff F$ strongly in the $x$ direction, but non-strongly in the $y$ direction.}
    \label{fig:partial}
\end{figure}

\begin{example}[Varying behaviour along subspaces]
    \label{ex:partial}
    With $u=(x, y) \in \R^2$ and $\mu > 0$, let
    \[
        F(u) \defeq G(x) + \frac{1}{2}(x^2+y^2)y^2
        \quad\text{with}\quad
        G(x) \defeq \begin{cases}
            0, & \abs{x} \le \mu, \\
            x-\mu, & x > \mu, \\
            x+\mu, & x < - \mu.
            \end{cases}
    \]
    This is illustrated in \cref{fig:partial}.
    Let $\gamma>0$.
    Then $\subdiff F$ is $(\Xi, 2I, M)$-partially strongly submonotone at any $((\realoptx, 0), 0) \in \graph \subdiff f$ in the neighbourhood $\mathcal{U}=(\mu+2\inv\gamma)[-1,1] \times \R$ with $M=\diag(m_x I, m_y I)$ and $\Xi=\diag(\gamma I, 0)$ for any $m_x \ge \gamma$ and $m_y \ge 0$.
\end{example}

\begin{demonstration}   
    Let $Q(u) \defeq \frac{1}{2}(x^2+y^2)y^2$. The function $Q$ is convex as verified by the positivity of the Hessian, hence is $\grad Q$ monotone. Clearly $\inv{[\subdiff F]}(0)=[-\mu, \mu] \times \{0\}$ with $\grad Q(u^*)=0$ for $u^* \in \inv{[\subdiff F]}(0)$.    
    We use the notation $u=(x, y)$ and $u^*=(x^*, y^*)$. 
    We have $\subdiff F(u)=\{(w, 0)+\grad Q(u) \mid w \in \subdiff G(x)\}$ and, by the monotonicity of $\grad Q$,
    \[
        \inf_{u^* \in \inv{[\subdiff F]}(0)}
        \left(
        \iprod{(w, 0)+\grad Q(u)-0}{u-u^*}_{2I}
        + \norm{u-u^*}_{M-\Xi}^2
        \right)
        \ge
        \inf_{u^* \in \inv{[\subdiff F]}(0)}
        \left(
        2w(x-x^*)
        + \norm{u-u^*}_{M-\Xi}^2
        \right).
    \]
    Consequently, the claimed partial strong submonotonicity at $((\realoptx, 0), 0) \in \graph \subdiff F$ holds if
    \begin{equation}
        \label{eq:ps-ex-x}
        \min_{x^* \in [-\mu,\mu]}
        2w(x-x^*)+(m_x-\gamma)(x-x^*)^2 + m_y y^2 
        \ge \min_{x^{**} \in [-\mu,\mu]} m_x(x-x^{**})^2 +  m_y y^2
    \end{equation}%
    for all $u=(x, y)$ in a neighbourhood $\mathcal{U}$ of $\realoptu=(\realoptx, 0)$, and all
    \[
        w \in \subdiff G(x)= \begin{cases}
            0, & \abs{x} < \mu, \\
            1, & x > \mu, \\
            [0, 1], & x=\mu, \\
            -1, & x < - \mu, \\
            [-1, 0], & x=-\mu.
            \end{cases}
    \]

    If $x \ge \mu$, then $w \ge 1$ and $x>x^*$. With $m_x \ge \gamma$, clearly both sides of the inequality in \eqref{eq:ps-ex-x} have the same minimiser $x^*=x^{**}=\mu$. So \eqref{eq:ps-ex-x} holds if $2(x-\mu) \ge \gamma(x-\mu)^2$, which is to say with $x \le \mu + 2\inv\gamma$.
    
    The case $x \le -\mu$ is analogous.

    If $\abs{x} < \mu$, we have $w=0$, so with $m_x \ge \gamma$, both sides of \eqref{eq:ps-ex-x} are minimised and are equal with $x^{**}=x^{*}=x$.
\end{demonstration}

\section{Rates from error bounds and subregularity}
\label{sec:rates-from-error-bounds}

We now study an approach alternative to submonotonicity: the error bounds that we discussed in the introduction.
Their essence is to prove for some $\kappa >0$ that
\begin{equation*}
    \label{eq:basic-error-bound}
    \tag{EB}
    \kappa \norm{\nextu-\thisu} \ge \norm{\nextu-\realoptu}.
\end{equation*}
One can see how this would improve \eqref{eq:convergence-fundamental-condition-iter-h} by allowing $\Test_{i+1}\Precond_{i+2}$ to grow faster. However, we generally cannot fix $\realoptu$, so would take the infimum over $\realoptu \in \inv H(0)$ above. We also want a \emph{partial}, operator-relative, variant, to deal with primal-dual methods, and even block-adapted methods \cite{tuomov-blockcp,tuomov-nlpdhgm-block}.

\subsection{Partial error bounds and subregularity}
\label{sec:error-bounds-from-subregularity}

We generalise error bounds to be operator-relative:

\begin{theorem}
    \label{thm:convergence-result-main-h-peb}
    On a Hilbert space $\Space$, let $H: \Space \setto \Space$, and $\Precond_{i+1}, \Test_{i+1}, P_{i+1} \in \linear(\Space; \Space)$ with $\Test_{i+1}\Precond_{i+1} \ge 0$ self-adjoint for all $i \in \N$.
    Suppose $\inv H(0)$ is non-empty and \eqref{eq:pp} is solvable for the iterates $\{\nextu\}_{i \in \N}$ given $u^0 \in \Space$.
    If for some $\delta \in [0, 1]$ the \term{partial error bound}
    \begin{equation}
        \label{eq:partial-error-bound}
        \tag{PEB}
        \delta\norm{\nextu-\thisu}^2_{\Test_{i+1}\Precond_{i+1}}
        +
        \dist_{\Test_{i+2}\Precond_{i+2}-\Test_{i+1}P_{i+1}}^2(\nextu, \inv H(0))
        \ge
        \dist_{\Test_{i+2}\Precond_{i+2}}^2(\nextu, \inv H(0))
    \end{equation}
    holds and
    \begin{equation}
        \label{eq:convergence-fundamental-condition-iter-h-peb}
        \frac{1-\delta}{2}\norm{\nextu-\thisu}_{\Test_{i+1} \Precond_{i+1}}^2
        + \frac{1}{2}\norm{\nextu-u^*}_{\Test_{i+1}(\Precond_{i+1}+P_{i+1})-\Test_{i+2}\Precond_{i+2}}^2
        + \iprod{H(\nextu)}{\nextu-u^*}_{\Test_{i+1}}
        \ge 0
    \end{equation}
    for all $i \in \N$ and $u^* \in \inv H(0)$, then \eqref{eq:convergence-result-main-h} holds.
\end{theorem}

\begin{proof}
    By \eqref{eq:convergence-fundamental-condition-iter-h-peb} for all $u^* \in \inv H(0)$ it holds
    \[
        \frac{\delta}{2}\norm{\nextu-\thisu}^2_{\Test_{i+1}\Precond_{i+1}}
        + \frac{1}{2}\norm{\nextu-u^*}_{\Test_{i+2}\Precond_{i+2}-\Test_{i+1}P_{i+1}}
        \ge
        \frac{1}{2}\dist^2_{\Test_{i+2}\Precond_{i+2}}(\nextu; \inv H(0)).
    \]
    Summing this with \eqref{eq:convergence-fundamental-condition-iter-h-peb}, and taking the infimum over $u^* \in \inv H(0)$, we obtain \eqref{eq:convergence-fundamental-condition-iter-h}.
    Then we just apply \cref{lemma:main-convergence}.
\end{proof}

An essential ingredient in proving the basic error bound \eqref{eq:basic-error-bound} is the \term{metric subregularity of $H$ at $\realoptu$ for $\realopt{w}=0$}: the existence of a neighbourhood $\mathcal{U} \ni \realoptu$ and $\kappa>0$ such that
\begin{equation}
    \label{eq:metric-subreg}
    \kappa \dist(\realopt{w}, H(u)) \ge \dist(u, \inv H(\realopt{w})) \quad (u \in \mathcal{U}).
\end{equation}
We refer to \cite{gfrerer2011first,ioffe2017variational,kruger2015error,dontchev2014implicit,ngai2008error} for more on error bounds and metric subregularity.
To prove \eqref{eq:partial-error-bound} we need a partial version.

\begin{definition}
    Let $\Space, \SpaceTwo$ be Hilbert spaces.
    Also let $M, P \in \linear(\Space; \Space)$ and $N \in \linear(\SpaceTwo; \SpaceTwo)$ with $N \ge 0$, $M \ge 0$, and $M \ge P$.
    We say say that $T: \Space \setto \SpaceTwo$ is \term{$(P,N,M)$-partially subregular} at $(\realoptu, \realoptw) \in \graph T$ if there exists a neighbourhood $\mathcal{U} \ni \realoptu$ such that
    \begin{equation}
        \label{eq:partial-subreg}
        \tag{PSR}
        \dist_N^2(\realopt{w}, T(u))
        + \dist_{M-P}^2(u, \inv T(\realopt{w}))
        \ge
        \dist_M^2(u, \inv T(\realopt{w}))
        \quad (u \in \mathcal{U}).
    \end{equation}
    We say that $T$ is $(N,M)$-subregular if $P=M$.
\end{definition}

\begin{lemma}
    \label{lemma:error-bound-first-estimate}
    Suppose $\Test_{i+1}\Precond_{i+1} \ge 0$ is self-adjoint and positive definite.
    Let $\nextu$ be generated by \eqref{eq:pp} given $\thisu$.
    Then
    \begin{equation}
         \label{eq:error-bound-first-estimate}
        \norm{\nextu-\thisu}_{\Test_{i+1}\Precond_{i+1}}^2
        \ge
        \dist_{\Test_{i+1}\inv{(\Test_{i+1}\Precond_{i+1})}\Test_{i+1}}^2(0, H(\nextu)).
    \end{equation}
\end{lemma}

\begin{proof}
    Let $\nexxt{q} \defeq -\Precond_{i+1}(\nextu-\thisu)$.
    Then $\nexxt{q} \in H(\nextu)$.
    By applying $\iprod{\freevar}{\nextu-\thisu}_{\Test_{i+1}}$ to \eqref{eq:pp}, we therefore obtain
    \[
        \norm{\nextu-\thisu}_{\Test_{i+1}\Precond_{i+1}}^2
        =
        -\iprod{\nexxt{q}}{\nextu-\thisu}_{\Test_{i+1}}.
    \]
    By our assumptions $\Test_{i+1}\Precond_{i+1}$ is invertible.
    Therefore we can solve $\nextu-\thisu=-\inv{(\Test_{i+1}\Precond_{i+1})}\Test_{i+1}\nexxt{q}$.
    It follows
    \[
         \norm{\nextu-\thisu}_{\Test_{i+1}\Precond_{i+1}}^2
         =
         \norm{\nexxt{q}}_{\Test_{i+1}^*\inv{(\Test_{i+1}\Precond_{i+1})}\Test_{i+1}}^2.
    \]
    This immediately yields the claim.
\end{proof}

As a consequence we obtain:

\begin{lemma}
    \label{lemma:subregularity-to-peb}
    Suppose $\Test_{i+1}\Precond_{i+1}$ is self-adjoint and positive definite.
    Let $P­_{i+1} \in \linear(\Space; \Space)$ and $\delta \in [0, 1]$.
    Then the partial error bound \eqref{eq:partial-error-bound} holds if $H$ is $(\Test_{i+1}P_{i+1},\delta \Test_{i+1}^*\inv{(\Test_{i+1}\Precond_{i+1})}\Test_{i+1},\Test_{i+2}\Precond_{i+2})$-partially subregular at some $\realoptu \in \inv H(0)$ in a neighbourhood $\mathcal{U}$  containing $\nextu$.
\end{lemma}

\begin{proof}
    The condition \eqref{eq:partial-subreg} now reads
    \[
        \dist_{\delta \Test_{i+1}^*\inv{(\Test_{i+1}\Precond_{i+1})}\Test_{i+1}}^2(0, H(u))
        + \dist_{\Test_{i+2}\Precond_{i+2}-\Test_{i+1}P_{i+1}}^2(u, \inv H(0))
        \ge
        \dist_{\Test_{i+2}\Precond_{i+2}}^2(u, \inv H(0))
        \quad (u \in \mathcal{U}).
    \]
    Taking $u=\nextu$ and combining with \eqref{eq:error-bound-first-estimate} from  \cref{lemma:error-bound-first-estimate} we obtain \eqref{eq:partial-error-bound}.
\end{proof}

\begin{corollary}
    \label{cor:convergence-result-subreg}
    On a Hilbert space $\Space$, let $H: \Space \setto \Space$,
    and $\Precond_{i+1}, \Test_{i+1},  P_{i+1} \in \linear(\Space; \Space)$ with $\Test_{i+1}\Precond_{i+1}$ self-adjoint for all $i \in \N$.
    Suppose $\inv H(0)$ is non-empty and that \eqref{eq:pp} is solvable for the iterates $\{\nextu\}_{i \in \N}$ given $u^0 \in \Space$.
    Pick $\delta \in [0, 1]$.
    If $H$ is $(\Test_{i+1}P_{i+1},\delta \Test_{i+1}^*\inv{(\Test_{i+1}\Precond_{i+1})}\Test_{i+1},\Test_{i+2}\Precond_{i+2})$-partially subregular at some $(\realoptu, 0) \in \graph H$ and \eqref{eq:convergence-fundamental-condition-iter-h-peb} holds (in particular, if $\Test_{i+1} H$ is monotone and
    \begin{equation}
        \label{eq:convergence-condition-subreg}
        \Test_{i+1}(\Precond_{i+1}+P_{i+1}) \ge \Test_{i+2}\Precond_{i+2} 
        \quad (i \in \N)),
    \end{equation}
    then \eqref{eq:convergence-result-main-h} holds provided $\{\thisu\}_{i=0}^N \subset \mathcal{U}$ for the neighbourhood $\mathcal{U}$ of partial subregularity.
\end{corollary}

\begin{proof}
    We use \cref{lemma:subregularity-to-peb} with \cref{thm:convergence-result-main-h-peb}.
\end{proof}


\begin{example}[Basic proximal point method, subregularity]
    \label{ex:prox-subregularity}
    Suppose $H: \Space \setto \Space$ is monotone and $(\pi I, \delta \tau^2 I, (1+\pi) I)$-partially subregular at $(\realoptu, 0) \in \graph H$ in the neighbourhood $\mathcal{U}$ for some $\tau >0$, $\pi \ge 0$, and $\delta \in [0, 1]$.
    Let $\{\nextu\}_{i \in \N}$ be generated by the basic proximal point method of \cref{ex:prox-submonotonicity} for some initial $u^0 \in \Space$.
    If $\{\thisu\}_{i \ge N} \subset \mathcal{U}$ for large enough $N \in \N$, then $\dist^2(u^i; \inv H(0)) \to 0$ at a linear rate.
\end{example}

\begin{demonstration}
    We take $\Precond_{i+1} \defeq \inv\tau I$, $P_{i+1} \defeq \pi I$, and $\Test_{i+1} \defeq \tauTest_i I$ for $\tauTest_{i+1} \defeq \tauTest_i(1+\pi)$ and $\tauTest_0 \defeq 1$.
    Then $\Test_{i+1}(\Precond_{i+1}+P_{i+1})=\Test_{i+2}\Precond_{i+2}$.  
    Combined with the monotonicity of $H$, this verifies \eqref{eq:convergence-fundamental-condition-iter-h-peb}.
    Observe then that $\delta \Test_{i+1}^*\inv{(\Test_{i+1}\Precond_{i+1})}\Test_{i+1}= \delta\tau^2\tauTest_i I$. By scaling invariance (detailed in the next \cref{prop:scaling-invariance:subreg}) we thus verify that $(\pi I, \delta \tau^2 I, (1+\pi) I)$-partial subregularity yields the subregularity demanded by \cref{lemma:subregularity-to-peb}. 
	Consequently we obtain the claim from  \cref{cor:convergence-result-subreg}.
\end{demonstration}

We do not in this instance provide further examples as they arise from those of \cref{sec:prox-submono,sec:prox-submono-partial} combined with the relationships between submonotonicity and subregularity that we study in the next section. They depend on the following transformation results.

\subsection{Transformation of subregularity}
\label{sec:trans-subreg}

The analogues of \cref{prop:scaling-invariance,prop:submonotonicity-vs-partial} hold for subregularity with virtually identical proofs.

\begin{proposition}
    \label{prop:scaling-invariance:subreg}
    Suppose $T: \Space \setto \Space$ is $(P, N, M)$-partially subregular at $(\realoptu, \realoptw) \in \graph T$ in a neighbourhood $\mathcal{U}$. Then for any $\alpha>0$, at the same point $(\realoptu, \realoptw)$ in the same neighbourhood $\mathcal{U}$,
    \begin{enumerate}[label=(\roman*),noitemsep]
        \item $T$ is $(\alpha P, \alpha N, \alpha M)$-partially subregular.
        \item $T$ is $(P, N, (1+\alpha)M)$-partially subregular.
    \end{enumerate}
\end{proposition}

\begin{proposition}
    \label{prop:subregularity-vs-partial}
    Let $T: \Space \setto \Space$, and  $P, N, M \in \linear(\Space; \Space)$ with $M \ge P \ge 0$.

    If $(P, M-P) \in \mathcal{P}(\inv T(\realoptw), \realoptu)$, then $(N, P)$-subregularity at $(\realoptu, \realoptw) \in \graph T$ implies $(P, N, M)$-partial subregularity at the same point.

    If $\inv T(\realoptw)=\{\realoptu\}$ is a singleton, these two properties are equivalent.
\end{proposition}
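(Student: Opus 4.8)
The plan is to transcribe the proof of \cref{prop:submonotonicity-vs-partial}, with $P$ now playing the role that $\Xi$ played there; the excerpt has already signalled that the two arguments are virtually identical, so the work is essentially bookkeeping. First I would unfold the hypothesis: $(N,P)$-subregularity at $(\realoptu,\realoptw)$ is by definition \eqref{eq:partial-subreg} with $M$ replaced by $P$, and since the term $\dist_{P-P}^2(u, \inv T(\realoptw))$ then involves the zero operator and drops out, the hypothesis reduces to the clean inequality $\dist_N^2(\realoptw, T(u)) \ge \dist_P^2(u, \inv T(\realoptw))$ on some neighbourhood $\mathcal{U} \ni \realoptu$.

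Second, I would invoke the projection assumption $(P, M-P) \in \mathcal{P}(\inv T(\realoptw), \realoptu)$: on a neighbourhood $\mathcal{U}' \ni \realoptu$, every $u$ admits a point $u^* \in \inv T(\realoptw)$ that is simultaneously a $\norm{\freevar}_P$- and $\norm{\freevar}_{M-P}$-projection onto $\inv T(\realoptw)$. Using $M = P + (M-P)$ and the linearity of $v \mapsto \iprod{Mv}{v}$, this gives $\dist_P^2(u, \inv T(\realoptw)) + \dist_{M-P}^2(u, \inv T(\realoptw)) = \norm{u-u^*}_M^2 \ge \dist_M^2(u, \inv T(\realoptw))$. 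Adding $\dist_{M-P}^2(u, \inv T(\realoptw))$ to both sides of the subregularity inequality and chaining with this estimate produces \eqref{eq:partial-subreg} in full, valid on $\mathcal{U} \cap \mathcal{U}'$; that is exactly $(P,N,M)$-partial subregularity at $(\realoptu,\realoptw)$.

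Third, for the singleton case $\inv T(\realoptw) = \{\realoptu\}$: the common projection of any nearby $u$ onto $\{\realoptu\}$ is trivially $\realoptu$, so $(P, M-P) \in \mathcal{P}(\inv T(\realoptw), \realoptu)$ holds automatically and the forward implication applies. For the converse, every $\dist$ to $\inv T(\realoptw)$ becomes a norm against $\realoptu$, and since $\norm{u-\realoptu}_M^2 = \norm{u-\realoptu}_P^2 + \norm{u-\realoptu}_{M-P}^2$ the $M-P$ term cancels from both sides of \eqref{eq:partial-subreg}, leaving precisely $\dist_N^2(\realoptw, T(u)) \ge \norm{u-\realoptu}_P^2$, i.e.\ $(N,P)$-subregularity.

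I expect no genuine obstacle: the content is purely algebraic once the definitions are unfolded. The only points requiring care are the ones already present in \cref{prop:submonotonicity-vs-partial} --- keeping track that the subregularity neighbourhood $\mathcal{U}$ and the projection neighbourhood $\mathcal{U}'$ need not coincide, so their intersection (still a neighbourhood of $\realoptu$) is what one ends up with, and noting that the additivity of the squared seminorms used above is legitimate because $P$, $M-P$, and $M$ are all positive semidefinite under the standing hypothesis $M \ge P \ge 0$.
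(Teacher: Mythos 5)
Your proposal is correct and follows exactly the route the paper intends: the paper gives no separate proof here, stating only that the argument is ``virtually identical'' to that of \cref{prop:submonotonicity-vs-partial}, and your transcription reproduces that argument faithfully (reducing $(N,P)$-subregularity to $\dist_N^2(\realoptw,T(u)) \ge \dist_P^2(u,\inv T(\realoptw))$, adding $\dist_{M-P}^2$ to both sides, and using the common projection to recombine into $\dist_M^2$). Your observation that the subregularity case is in fact slightly cleaner than the submonotonicity case --- the ``$\inf+\inf\le\inf$'' step is not needed because the two distance terms in \eqref{eq:partial-subreg} are already separately infimised --- is accurate, and the handling of the two neighbourhoods and of the singleton converse is correct.
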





The next lemma helps scale some of the factors of partial subregularity in more general cases.

\begin{lemma}
    \label{lemma:subregularity-conversion}
    Let $T: \Space \setto \Space$, and  $P, N, M \in \linear(\Space; \Space)$ with $M \ge 0$, and $M \ge P$. Also pick $\alpha \ge 0$, and, if $\alpha \in (0, 1)$, suppose that $(M,M-P) \in \mathcal{P}(\inv T(\realoptw), \realoptu)$.
    Then $T$ is $(P,N,M)$-partially subregular at $(\realoptu,\realoptw) \in \graph T$ if it is $(\alpha P, \alpha N, M)$-partially subregular at this point.

    Conversely, if $(M,M-\alpha P) \in \mathcal{P}(\inv T(\realoptw), \realoptu)$ when $\alpha > 1$, then $T$ is $(\alpha P,\alpha N,M)$-partially subregular at $(\realoptu,\realoptw) \in \graph T$ if it is $(P, N, M)$-partially subregular at this point.

    In particular, if $(M,M-\max\{\alpha,1\} P) \in \mathcal{P}(\inv T(\realoptw), \realoptu)$, then the relationship is ``if and only if''.
\end{lemma}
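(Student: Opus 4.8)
The plan is to reduce everything to a single forward implication, namely that $(\alpha P, \alpha N, M)$-partial subregularity at $(\realoptu, \realoptw)$ implies $(P, N, M)$-partial subregularity there, under the stated projection hypothesis. Granting this, the converse is obtained by applying the forward implication with $1/\alpha$, $\alpha P$, $\alpha N$ in the roles of $\alpha$, $P$, $N$: indeed $\frac{1}{\alpha}\cdot(\alpha P) = P$, $\frac{1}{\alpha}\cdot(\alpha N) = N$, and $\frac{1}{\alpha} \in (0,1)$ precisely when $\alpha > 1$, in which case the hypothesis $(M, M - (\alpha P)) \in \mathcal{P}(\inv T(\realoptw),\realoptu)$ is exactly what the relabelled forward implication demands. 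The final ``if and only if'' then follows because $(M, M - \max\{\alpha,1\}P) \in \mathcal{P}(\inv T(\realoptw),\realoptu)$ specialises to whichever of $(M, M-P)$ (needed when $\alpha \in (0,1)$) or $(M, M-\alpha P)$ (needed when $\alpha > 1$) is relevant. We may assume $\alpha > 0$.

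Abbreviate $A \defeq \inv T(\realoptw)$, and note $\dist_{\alpha N}^2(\realoptw, T(u)) = \alpha\,\dist_N^2(\realoptw, T(u))$, and that $M - \alpha P \ge 0$ (automatic for $\alpha \le 1$ from $M \ge 0$, $M \ge P$; implicit in assuming $(\alpha P,\alpha N, M)$-partial subregularity when $\alpha > 1$). For $\alpha \ge 1$ I would use the quadratic-form identity $M - P = \frac{1}{\alpha}(M - \alpha P) + (1 - \frac{1}{\alpha})M$, whose coefficients are non-negative. Applying it to $u - v$ with $v \in A$ and using $\inf(f+g) \ge \inf f + \inf g$ gives $\dist_{M-P}^2(u, A) \ge \frac{1}{\alpha}\dist_{M-\alpha P}^2(u, A) + (1 - \frac{1}{\alpha})\dist_M^2(u,A)$. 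Adding $\dist_N^2(\realoptw, T(u))$ to both sides and invoking the defining inequality of $(\alpha P,\alpha N,M)$-partial subregularity divided by $\alpha$, namely $\dist_N^2(\realoptw, T(u)) + \frac{1}{\alpha}\dist_{M-\alpha P}^2(u, A) \ge \frac{1}{\alpha}\dist_M^2(u, A)$, collapses the right-hand side to $\dist_M^2(u, A)$. No projection condition is used, as claimed.

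For $\alpha \in (0,1)$ the identity carries a negative coefficient, so instead I would use $(M, M-P) \in \mathcal{P}(A, \realoptu)$: on a possibly smaller neighbourhood every $u$ has a \emph{common} projection $u^* \in A$ for $\norm{\freevar}_M$ and $\norm{\freevar}_{M-P}$, so that $\dist_M^2(u, A) = \norm{u-u^*}_M^2$ and $\dist_{M-P}^2(u, A) = \norm{u-u^*}_{M-P}^2$ simultaneously. Writing $M - \alpha P = (1-\alpha)M + \alpha(M-P)$ then yields the exact evaluation $\norm{u - u^*}_{M-\alpha P}^2 = (1-\alpha)\dist_M^2(u, A) + \alpha\dist_{M-P}^2(u, A)$, hence $\dist_{M-\alpha P}^2(u, A) \le (1-\alpha)\dist_M^2(u, A) + \alpha\dist_{M-P}^2(u, A)$. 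Substituting this into $\alpha\dist_N^2(\realoptw, T(u)) + \dist_{M-\alpha P}^2(u, A) \ge \dist_M^2(u, A)$, the $(1-\alpha)\dist_M^2(u,A)$ terms cancel, leaving $\alpha\dist_N^2(\realoptw, T(u)) + \alpha\dist_{M-P}^2(u, A) \ge \alpha\dist_M^2(u, A)$; dividing by $\alpha > 0$ gives $(P, N, M)$-partial subregularity on the intersection of the two neighbourhoods.

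The routine parts — the quadratic-form identities, the positivity of $M - \alpha P$, the scaling $\dist_{\alpha N}^2 = \alpha\dist_N^2$ — are immediate, and there is no deep obstacle; the only points requiring care are the bookkeeping of neighbourhoods in the $\alpha \in (0,1)$ case and checking that the relabelling producing the converse matches the hypotheses exactly. The degenerate value $\alpha = 0$ is left out of scope, since there $(0,0,M)$-partial subregularity is vacuous and the intended statement concerns genuine rescalings.
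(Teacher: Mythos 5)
Your proposal is correct and follows essentially the same route as the paper: the $\alpha\ge 1$ case by a non-negative convex-combination identity for the quadratic forms plus superadditivity of the infimum (no projection condition needed), the $\alpha\in(0,1)$ case by evaluating at the common projection guaranteed by $(M,M-P)\in\mathcal{P}(\inv T(\realoptw),\realoptu)$, and the converse by relabelling $(\alpha,P,N)\mapsto(\inv\alpha,\alpha P,\alpha N)$. The paper merely organises the same algebra slightly differently, multiplying the hypothesis by $\inv\alpha$ first and rearranging before splitting into the two cases.
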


\begin{proof}
    If $T$ is $(\alpha P,\alpha N,M)$-partially subregular, in some neighbourhood $\mathcal{U}$ of $\realoptu$ for every $u^* \in \inv T(\realoptw)$ and $u \in \mathcal{U}$ it holds
    \[
        \dist_{\alpha N}^2(\realoptw, T(u))
        + \norm{u-u^*}_{M-\alpha P}^2
        \ge
        \dist_M^2(u, \inv T(\realoptw)).
    \]
    After multiplying by $\inv\alpha$, we rearrange this as
    \[
        \dist_{N}^2(\realoptw, T(u))
        + \norm{u-u^*}_{M-P}^2
        \ge
        \dist_{\inv\alpha M}^2(u, \inv T(\realoptw))
        + \norm{u-u^*}_{M-\inv\alpha M}^2.
    \]
    If $\alpha<1$, we now let $u^*$ be the common projection of $u$ to the closed set $\inv T(\realopt{w})$ in the norms $\norm{\freevar}_{M}$ and $\norm{\freevar}_{M-P}$. Otherwise, if $\alpha \ge 1$, we just take the infimum over $u^* \in \inv T(\realoptw)$ on both sides.
    This proves $(P, N, M)$-partial subregularity.

    For the converse implication, we have to prove the relationship in the other direction. This amounts to applying the first claim with $\alpha$ replaced by $\inv\alpha$ to $(P', N',M)=(\alpha P, \alpha N, M)$. If $\alpha \in (0, 1]$, the projection condition is not required in this direction as $\inv\alpha \ge 1$; if $\alpha > 1$ then we need the projection to be the same with respect to $M$ and $M-P'=M-\alpha P$. This gives the projection condition in the converse claim.

    Finally, the ``if and only if'' claim just combines the two implications.
\end{proof}

\section{Relationships between subregularity and submonotonicity}
\label{sec:relationships}

Having introduced the distinct concepts of partial strong submonotonicity and partial subregularity motivated by algorithmic needs, we now study some basic theoretical relationships between these concepts. Throughout, we assume that $\Space$ and $\SpaceTwo$ are Hilbert spaces.

\subsection{General implications between submonotonicity and subregularity}

The following lemmas generalise \cite[Theorem 3.3]{aragon2008characterization} from convex subdifferentials to general set-valued maps and partial subregularity.

\begin{lemma}
    \label{lemma:submonotonicity-to-subregularity}
    With $\Space$ and $\SpaceTwo$ Hilbert spaces, let $T: \Space \setto \Space$, and $\Xi, N, M \in \linear(\Space; \Space)$ with $M \ge 0$.
    Pick $\alpha>0$.
    Suppose the following structural conditions hold:
    \begin{enumerate}[label=(\roman*)]
        \item
        \label{item:equivalence-product}
        $N=A^*B$ for some $A, B \in \linear(\SpaceTwo; \Space)$,
        \item
        \label{item:equivalence-alpha}
        $\Xi_\alpha \defeq \inv\alpha\Xi-\alpha^{-2}A^*A/4 \le \min\{1,\inv\alpha\}M$, and
        \item
        \label{item:equivalence-projection}
        $(M, M-\Xi_\alpha) \in \mathcal{P}(\inv T(\realoptw), \realoptu)$ when $\alpha \in (0, 1)$.
    \end{enumerate}
    Then $T$ is $(\Xi_\alpha,B^*B,M)$-partially subregular at $(\realoptu, \realoptw) \in \graph T$ if it is $(\Xi,N,M)$-partially strongly submonotone at this point.

    If \ref{item:equivalence-projection} does not hold, we still have $(\alpha\Xi_\alpha, \alpha B^*B, M)$-partial subregularity.
\end{lemma}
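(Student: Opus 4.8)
The plan is to start from the defining inequality~\eqref{eq:submonotonicity} of $(\Xi,N,M)$-partial strong submonotonicity, use the factorisation $N=A^*B$ together with Young's inequality to split the bilinear term, absorb the leftover $u^*$-dependent quadratic into $\norm{u-u^*}_{M-\Xi}^2$ by means of the definition of $\Xi_\alpha$, and then take an infimum over $w\in T(u)$ to recognise a scaled partial subregularity. A final rescaling through \cref{lemma:subregularity-conversion} removes the factor $\alpha$.

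Concretely, for $u$ in the neighbourhood $\mathcal{U}$ furnished by submonotonicity and any $w\in T(u)$, write $\iprod{w-\realoptw}{u-u^*}_N=\iprod{B(w-\realoptw)}{A(u-u^*)}$ and bound it, via the weighted Young inequality with parameter $\alpha>0$, by $\alpha\norm{w-\realoptw}_{B^*B}^2+\frac{1}{4\alpha}\norm{u-u^*}_{A^*A}^2$. The definition of $\Xi_\alpha$ rearranges to $\alpha\Xi_\alpha=\Xi-\frac{1}{4\alpha}A^*A$, hence
\[
\frac{1}{4\alpha}\norm{u-u^*}_{A^*A}^2+\norm{u-u^*}_{M-\Xi}^2=\norm{u-u^*}_{M-\alpha\Xi_\alpha}^2 .
\]
Substituting these into~\eqref{eq:submonotonicity} and pulling the term $\alpha\norm{w-\realoptw}_{B^*B}^2$ out of the infimum over $u^*\in\inv T(\realoptw)$ (it does not depend on $u^*$) gives, for every $w\in T(u)$,
\[
\dist_M^2(u,\inv T(\realoptw)) \le \alpha\norm{w-\realoptw}_{B^*B}^2 + \dist_{M-\alpha\Xi_\alpha}^2(u,\inv T(\realoptw)) .
\]
The left-hand side is independent of $w$, so taking the infimum over $w\in T(u)$ on the right produces $\dist_M^2(u,\inv T(\realoptw))\le\alpha\dist_{B^*B}^2(\realoptw,T(u))+\dist_{M-\alpha\Xi_\alpha}^2(u,\inv T(\realoptw))$, which is exactly condition~\eqref{eq:partial-subreg} for $(\alpha\Xi_\alpha,\alpha B^*B,M)$-partial subregularity at $(\realoptu,\realoptw)$. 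This already settles the final sentence of the statement.

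For the $\dist$-quantities to be well defined (and for the conversion step) one needs $M-\alpha\Xi_\alpha\ge0$ and $M\ge\Xi_\alpha$; both follow from~\ref{item:equivalence-alpha} by a case split on $\alpha$. If $\alpha\le1$, then $\Xi_\alpha\le M$ gives $M-\alpha\Xi_\alpha=(1-\alpha)M+\alpha(M-\Xi_\alpha)\ge0$; if $\alpha\ge1$, then $\Xi_\alpha\le\inv\alpha M$ gives $\alpha\Xi_\alpha\le M$ and a fortiori $\Xi_\alpha\le M$. Now apply \cref{lemma:subregularity-conversion} with $P\defeq\Xi_\alpha$, $N\defeq B^*B$, the same $M$, and scaling factor $\alpha$: it converts the $(\alpha\Xi_\alpha,\alpha B^*B,M)$-partial subregularity just obtained into the claimed $(\Xi_\alpha,B^*B,M)$-partial subregularity, its projection hypothesis $(M,M-\Xi_\alpha)\in\mathcal{P}(\inv T(\realoptw),\realoptu)$ being required precisely when $\alpha\in(0,1)$ — which is assumption~\ref{item:equivalence-projection} — and not otherwise.

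The computations reduce to one Young-type estimate plus the elementary identity for $\Xi_\alpha$, so the only genuine care points are the positive-semidefiniteness bookkeeping that legitimises the various distances and the hypothesis matching when invoking \cref{lemma:subregularity-conversion}; both are handled by the $\alpha\le1$ versus $\alpha\ge1$ dichotomy encoded in~\ref{item:equivalence-alpha} and~\ref{item:equivalence-projection}. I do not anticipate any substantial obstacle beyond this.
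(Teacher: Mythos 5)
Your proposal is correct and follows essentially the same route as the paper's proof: the weighted Young/Cauchy estimate applied to $\iprod{B(w-\realoptw)}{A(u-u^*)}$, the identity $\alpha\Xi_\alpha=\Xi-A^*A/(4\alpha)$ to absorb the quadratic remainder into $\norm{u-u^*}_{M-\alpha\Xi_\alpha}^2$, and a final appeal to \cref{lemma:subregularity-conversion} with the projection hypothesis needed exactly when $\alpha\in(0,1)$. The only difference is that you spell out the positive-semidefiniteness bookkeeping from \ref{item:equivalence-alpha} more explicitly than the paper does, which is harmless.
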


\begin{proof}
    By partial strong submonotonicity, for some neighbourhood $\mathcal{U}$ of $\realoptu$, for all $u^* \in \inv T(\realoptw)$, $u \in \mathcal{U}$, and $w \in T(u)$ it holds
    \begin{equation*}
        \iprod{w-\realoptw}{u-u^*}_N
        + \norm{u-u^*}_{M-\Xi}^2
        \ge \dist^2_M(u, \inv T(\realoptw)).
    \end{equation*}
    By Young's inequality and \ref{item:equivalence-product}, for any $\alpha>0$  therefore
    \begin{equation*}
        \norm{w-\realoptw}_{\alpha B^*B}^2
        + \norm{u-u^*}_{M-\Xi+A^*A/(4\alpha)}^2
        \ge
        \dist^2_M(u, \inv T(\realoptw)).
    \end{equation*}
    Since $\Xi-A^*A/(4\alpha)=\alpha\Xi_\alpha$, and $M \ge \alpha\Xi_\alpha$ by \ref{item:equivalence-alpha}, we obtain $(\alpha\Xi_\alpha, \alpha B^*B, M)$-partial subregularity after taking the infimum over $u^* \in \inv T(\realoptw)$ and $w \in T(u)$. By \ref{item:equivalence-alpha} also $M \ge \Xi_\alpha$. Minding \ref{item:equivalence-projection}, we may therefore apply \cref{lemma:subregularity-conversion}, which yields the claimed $(\Xi­_\alpha, B^*B, M)$-partial subregularity.
\end{proof}

For the converse relationship, we essentially have to assume that $T$ is a convex subdifferential.

\begin{definition}
    \label{def:gap-function}
    Let $T: \Space \setto \Space$, $N,\Gamma \in \linear(\Space; \Space)$, and $(\realoptu,\realoptw) \in \graph T$. We say that $\tilde\gap: \Space \times \Space \to \extR$ is an \term{$(N,\Gamma)$-gap function} for $T$ at $\realoptw$ if for all $u^* \in \inv T(\realoptw)$ we have $\iprod{T(u)}{u-u^*}_N \ge \tilde\gap(u; u^*) + \norm{u-u^*}_{\Gamma}^2$ with $\subdiff \tilde\gap(\freevar; u^*)=NT$, and $\tilde \gap(u^*; u^*)=0$.
\end{definition}

In the next result, we stress that $\mathcal{U}$ is open; otherwise in the present work, we do not strictly require this, and $\mathcal{U}$ could indeed be any subset of $\Space$.

\begin{lemma}
    \label{lemma:subregularity-to-submonotonicity}
    Let $T: \Space \setto \Space$ have closed values, and $\Xi, N, M, P \in \linear(\Space; \Space)$ with $M \ge 0$, and $M \ge P$.
    Suppose the following structural conditions hold:
    \begin{enumerate}[label=(\roman*)]
        \item\label{item:equivalence-regularity-to-monotonicity-1}
        $N=A^*B$ for some $A, B \in \linear(\Space; \Space)$.

        \item\label{item:equivalence-regularity-to-monotonicity-m}
        $A$ is invertible with $M\le m A^*A$ for some $m > 0$.

        \item\label{item:equivalence-regularity-to-monotonicity-2}
        $T$ admits for some $\Gamma \in \linear(\Space; \Space)$ an $(N,\Gamma)$-gap function $\tilde\gap$ at $\realoptw$.

        \item\label{item:equivalence-regularity-to-monotonicity-3}
        $M-\Xi+\Gamma$ is self-adjoint and positive semi-definite.

        \item\label{item:equivalence-regularity-to-monotonicity-4}
        $M \le \alpha P$ for some $0 < \alpha < 1/(16m)$.

        \item\label{item:equivalence-regularity-to-monotonicity-projection}
        $(M, M-\Xi_\alpha) \in \mathcal{P}(\inv T(\realoptw), \realoptu)$ when $\alpha > 1$.
    \end{enumerate}
    Then $T$ is $(\Xi,N,M)$-partially strongly submonotone at $(\realoptu, \realoptw) \in \graph T$ provided it is $(P,B^*B,M)$-partially subregular at this point. In both properties, we require the neighbourhood $\mathcal{U}$ to be open.

    We may replace \ref{item:equivalence-regularity-to-monotonicity-projection} and $(P,B^*B,M)$-partial subregularity by $(\alpha P, \alpha B^*B,M)$-partial subregularity.
\end{lemma}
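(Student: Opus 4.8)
The plan is to reduce, via the assumed gap function, to a weighted quadratic growth bound for a single convex function, and then to adapt the subregularity-to-strong-convexity direction of \cite[Theorem 3.3]{aragon2008characterization} to the weighted norms $\norm{\freevar}_M$ and $\norm{\freevar}_{(A^*A)^{-1}}$. Concretely, fix $u^* \in \inv T(\realoptw)$ and set $\hat g(u) \defeq \tilde\gap(u; u^*) - \iprod{\realoptw}{u - u^*}_N$. Using $\subdiff\tilde\gap(\freevar;u^*)=NT$, $\tilde\gap(u^*;u^*)=0$, and $\realoptw \in T(u^*)$, one checks that $0 \in \subdiff\hat g(u^*)$, that $\hat g$ is convex with $\hat g \ge 0$, that $\hat g$ vanishes on $\inv T(\realoptw)$ and does not depend on the choice of $u^*$, that $\subdiff\hat g = N(T - \realoptw)$, and that $\inv T(\realoptw) \subseteq \argmin\hat g$. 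The defining inequality of the $(N,\Gamma)$-gap function then reads $\iprod{w-\realoptw}{u-u^*}_N \ge \hat g(u) + \norm{u-u^*}_\Gamma^2$ for all $w \in T(u)$ and $u^* \in \inv T(\realoptw)$. Adding $\norm{u-u^*}_{M-\Xi}^2$, using that $M-\Xi+\Gamma$ is self-adjoint and $\ge 0$ by \ref{item:equivalence-regularity-to-monotonicity-3}, and taking the infimum over $u^* \in \inv T(\realoptw)$, the required $(\Xi, N, M)$-partial strong submonotonicity \eqref{eq:submonotonicity} reduces to the quadratic growth estimate $\hat g(u) \ge \dist^2_M(u, \inv T(\realoptw))$ on a neighbourhood of $\realoptu$ (this is already more than needed, since the nonnegative term $\dist^2_{M-\Xi+\Gamma}$ was discarded).

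\emph{Rephrasing the hypothesis.} By \cref{lemma:subregularity-conversion}, invoking the projection condition \ref{item:equivalence-regularity-to-monotonicity-projection} only when $\alpha > 1$, we may assume $T$ is $(\alpha P, \alpha B^*B, M)$-partially subregular at $(\realoptu,\realoptw)$. Since $N=A^*B$ with $A$ invertible by \ref{item:equivalence-regularity-to-monotonicity-1} and \ref{item:equivalence-regularity-to-monotonicity-m}, the identity $A(A^*A)^{-1}A^*=I$ yields $\dist^2_{B^*B}(\realoptw, T(u)) = \dist^2_{(A^*A)^{-1}}(0, \subdiff\hat g(u))$, so this partial subregularity reads
\[
    \alpha\,\dist^2_{(A^*A)^{-1}}(0, \subdiff\hat g(u)) + \dist^2_{M-\alpha P}(u, \inv T(\realoptw)) \ge \dist^2_M(u, \inv T(\realoptw)).
\]
As $M \le \alpha P$ by \ref{item:equivalence-regularity-to-monotonicity-4}, the middle term is nonpositive, whence we obtain the weighted strong subregularity $\dist^2_{(A^*A)^{-1}}(0, \subdiff\hat g(u)) \ge \tfrac1\alpha \dist^2_M(u, \inv T(\realoptw)) > 16m\,\dist^2_M(u, \inv T(\realoptw))$ of $\subdiff\hat g$ at $(\realoptu, 0)$ on some open neighbourhood $\mathcal{U} \ni \realoptu$, with $m$ as in \ref{item:equivalence-regularity-to-monotonicity-m}.

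\emph{Quadratic growth (the core).} It remains to turn this estimate together with $M \le mA^*A$ into $\hat g(u) \ge \dist^2_M(u, \inv T(\realoptw))$. For $u \in \mathcal{U}$ take an $M$-projection $u^*$ of $u$ onto the closed convex set $\inv T(\realoptw)$ — which coincides locally with $\argmin\hat g$ — put $d \defeq \dist_M(u, \inv T(\realoptw))$, and study $\phi(t) \defeq \hat g(u^* + t(u-u^*))$ for $t \in [0,1]$: it is convex with $\phi(0)=0$ and $\phi \ge 0$, and $\dist_M(u^* + t(u-u^*), \inv T(\realoptw)) = t d$ because $u^*$ is also the $M$-projection of $u^* + t(u-u^*)$. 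At almost every $t$ one has $\phi'(t) = \iprod{v_t}{u-u^*}$ for every $v_t \in \subdiff\hat g(u^* + t(u-u^*))$; the subgradient inequality of $\hat g$ there gives $t\phi'(t) = \iprod{v_t}{u^* + t(u-u^*)-u^*} \ge \hat g(u^* + t(u-u^*)) = \phi(t) \ge 0$. Combining this with the bound $\norm{v_t}_{(A^*A)^{-1}}^2 \ge 16m\,t^2 d^2$ from the previous step, the calibration $M \le mA^*A$, and the convexity argument of \cite{aragon2008characterization} that aligns the relevant subgradient with the direction $u - u^*$, one gets $\phi'(t) \ge 2t d^2$; integrating over $[0,1]$ yields $\hat g(u) = \phi(1) \ge d^2 = \dist^2_M(u, \inv T(\realoptw))$. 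Here the constant $16 = 4 \cdot 4$ splits naturally: one factor $4$ is needed so that the $\int_0^1 t\,dt = \tfrac12$ in the integration closes with margin, and a further factor $4$ absorbs the loss in the Cauchy--Schwarz alignment (both in squared quantities). Openness of $\mathcal{U}$, stressed in the statement, keeps $u^* + t(u-u^*)$ in $\mathcal{U}$ for all $t$, and closed-valuedness of $T$ lets us realise the infima by genuine subgradients. The final bookkeeping — replacing $(P, B^*B, M)$ by $(\alpha P, \alpha B^*B,M)$ and the accompanying projection condition — is again covered by \cref{lemma:subregularity-conversion}, which also gives the alternative hypothesis stated at the end.

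The hard part is this last step, and within it the Cauchy--Schwarz alignment: strong subregularity only controls the \emph{magnitude} $\norm{v_t}_{(A^*A)^{-1}}$ of the subgradients, whereas the integrand $\phi'(t)$ is their component along the direction $u-u^*$ to the solution set, so one must exploit convexity to argue (as in \cite{aragon2008characterization}) that the relevant subgradient is essentially aligned with that direction, and keep precise track of the constant lost there together with the $M$-versus-$A^*A$ discrepancy — which is exactly what the bound $\alpha < 1/(16m)$ in \ref{item:equivalence-regularity-to-monotonicity-4} is tuned to absorb.
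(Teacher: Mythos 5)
Your reduction to a quadratic growth estimate for $\hat g$ is a reasonable reformulation, and the first two stages (setting up $\hat g$, converting $(P,B^*B,M)$- to $(\alpha P,\alpha B^*B,M)$-partial subregularity via \cref{lemma:subregularity-conversion}, and rewriting it as a lower bound on $\dist^2_{(A^*A)^{-1}}(0,\subdiff\hat g(u))$) match what the paper does in spirit. But the step you yourself identify as ``the core'' is a genuine gap, not a deferrable detail. Along the segment $z_t = u^* + t(u-u^*)$, convexity only gives $\phi'(t) = \iprod{v_t}{u-u^*} \ge \phi(t)/t$, which is circular for establishing growth of $\phi$, and subregularity only controls the \emph{magnitude} $\norm{v_t}_{(A^*A)^{-1}}$. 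There is no general lower bound on the alignment $\iprod{v_t}{u-u^*}/(\norm{v_t}_{(A^*A)^{-1}}\norm{u-u^*}_{A^*A})$: already for $\hat g(u)=u_1^2+\delta u_2^2$ with $u-u^*=(\sqrt\delta,1)$ the alignment factor is $O(\sqrt\delta)$, i.e.\ it degenerates together with the subregularity constant, and no ``convexity argument that aligns the subgradient with $u-u^*$'' appears in \cite{aragon2008characterization} -- their proof of this direction, like the paper's, goes through Ekeland's variational principle \cite{ekeland1974variational}. That is precisely the missing idea: one assumes the submonotonicity inequality fails at some $u'$ with defect $\epsilon$, applies Ekeland to $f(u)=\tilde\gap(u;u^*)-\iprod{\realoptw}{u-u^*}_N$ to produce a nearby point $v_\lambda$ whose subdifferential contains an element of $(A^*A)^{-1}$-norm at most $\sqrt m\,\epsilon/\lambda$ (a bound on the \emph{whole} subgradient, not just its component along one direction), and then plays this off against subregularity at $v_\lambda$ to get $\epsilon \le 2\alpha m(\epsilon/\lambda)^2 + 2\lambda^2$, which is contradictory for $\alpha<1/(16m)$ after optimising $\lambda$. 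Your integration scheme cannot reproduce this because it never produces a point where the full subgradient is small.

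Two secondary problems would also need repair even if the alignment step were salvageable. First, you treat $\inv T(\realoptw)$ as a ``closed convex set'' coinciding locally with $\argmin\hat g$ and take $M$-projections onto it; the lemma assumes neither convexity of $\inv T(\realoptw)$ nor positive definiteness of $M$ (only $M\ge 0$), so the existence and the claimed properties of that projection are unjustified (the identity $\dist_M(z_t,\inv T(\realoptw))=td$ survives for closed sets, but the existence of a minimiser and the role of $\argmin\hat g$ do not). Second, openness of $\mathcal{U}$ does not place the whole segment $[u^*,u]$ inside $\mathcal{U}$ -- $u^*$ need not even lie in $\mathcal{U}$ -- so the subregularity estimate cannot be invoked at every $z_t$; in the paper's argument the analogous issue (ensuring $v_\lambda\in\mathcal{U}$) is handled quantitatively by shrinking $r$ and choosing $\lambda$, which is where the second inequality of \eqref{eq:contradiction-theta} comes from.
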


\begin{remark}
	The condition \cref{item:equivalence-regularity-to-monotonicity-4} typically forces $P > 0$, so we require ``full'' subregularity in the sense of non-singularity of $P$. Together with the condition $M \ge P$, it may happen that only $M=P$ is possible.
\end{remark}

The proof follows ideas from \cite{aragon2008characterization,artacho2013metric}, adding the necessary extra work for partial regularity.

\begin{proof}
    Suppose, to reach a contradiction, that the claimed partial strong submonotonicity does not hold in any neighbourhood of $\realoptu$.
    Then for any $r>0$, we can find some $u^* \in \inv T(\realoptw)$ and
   	\begin{equation}
   		\label{eq:subregularity-to-submonotonicity-uprime-ball}
    	u' \in \tilde\B_M(\realoptu, r) \defeq \{ u \in \Space \mid \norm{\realoptu-u}_M < r,\, \norm{\realoptu-u} < r\}
    \end{equation}
    with
    \[
        \inf_{w \in T(u')} \left( \iprod{w-\realoptw}{u'-u^*}_N
        + \norm{u'-u^*}_{M-\Xi}^2 \right)
        < \dist_M^2(u', \inv T(\realoptw)).
    \]
    This implies
    \begin{equation}
        \label{eq:epsilon-def}
        f(u') < \epsilon \defeq
        \dist_M^2(u', \inv T(\realoptw)) - \norm{u'-u^*}_{M-\Xi+\Gamma}^2
    \end{equation}
    for
    \[
        f(u)
        \defeq
            \tilde\gap(u; u^*)
            -\iprod{\realoptw}{u-u^*}_N.
    \]
    Since $N \realoptw \in \subdiff \tilde\gap(u^*; u^*)$, clearly $u^*$ is a minimiser of $f$. Moreover $f(u^*)=0$ because $\tilde\gap(u^*; u^*)=0$. Thus $f \ge 0$ and $f(u') \le f(\realoptu) + \epsilon$. By Ekeland's variational principle \cite{ekeland1974variational}, given $\lambda>0$ (to be specified later), there now exits $v_\lambda \in \Space$ with
    \begin{subequations}
    \begin{gather}
        \label{eq:ekeland-dist}
        \norm{u'-v_\lambda}_{M} \le \lambda,
        \\
        f(v_\lambda) 
        \le f(u'),
        \quad\text{and}\\
        f(x) > f(v_\lambda) - (\epsilon/\lambda)\norm{x-v_\lambda}_{M}
        \quad (x \ne v_\lambda).
    \end{gather}
    \end{subequations}
    It follows
    \[
        f(x) + (\epsilon/\lambda)\norm{x-v_\lambda}_{M} > f(v_\lambda) + (\epsilon/\lambda)\norm{v_\lambda-v_\lambda}_{M}
        \quad (x \ne v_\lambda).
    \]
    Thus $v_\lambda$ is a minimiser of the convex function $x \mapsto f(x) + (\epsilon/\lambda)\norm{x-v_\lambda}_{M}$.
    The necessary first-order optimality conditions therefore state
    \[
        N(T(v_\lambda)-\realoptw) + (\epsilon/\lambda) M q \ni 0 \quad\text{for some}\quad \norm{q} \le 1.
    \]
    Since $N=A^*B$, using \ref{item:equivalence-regularity-to-monotonicity-m} we obtain
    \begin{equation}
        \label{eq:equiv-regularity-to-monotonicity-estim1}
        \dist^2_{B^*B}(\realoptw, T(v_\lambda)) \le m(\epsilon/\lambda)^2.
    \end{equation}

    Under the assumption that $T$ is $(P,B^*B, M)$-partially subregular at $(\realoptu, \realoptw) \in \graph T$, we obtain by  \ref{item:equivalence-regularity-to-monotonicity-projection} and \cref{lemma:subregularity-conversion} corresponding $(\alpha P,\alpha B^*B, M)$-partial subregularity.
    This implies
    \begin{equation}
        \label{eq:equivalence-regularity-to-monotonicity-subreg-vlambda}
        \dist^2_{M}(v_\lambda, \inv T(\realoptw))
        -
        \norm{v_\lambda-u^*}_{M- \alpha P}^2
        \le
        \dist_{\alpha B^*B}^2(\realoptw, T(v_\lambda)),
    \end{equation}
    provided $v_\lambda \in \mathcal{U}$ for $\mathcal{U}$ the neighbourhood of partial subregularity. We may without loss of generality assume that $\mathcal{U}=\tilde \B_M(\realoptu, R)$ for some $R>0$. 

    Recalling the definition of $\epsilon$ in \eqref{eq:epsilon-def}, using \ref{item:equivalence-regularity-to-monotonicity-3} and Young's inequality, we estimate
    \begin{equation}
    	\label{eq:equiv-regularity-to-monotonicity-epsilon2.0}
        \begin{split}
        \epsilon & = \dist_M^2(u', \inv T(\realoptw)) -\norm{u'-u^*}_{M-\Xi+\Gamma}^2
        \le
        \dist_M^2(u', \inv T(\realoptw)) 
        \\
        &
        \le 2[\dist_M^2(v_\lambda, \inv T(\realoptw)) - \norm{v_\lambda-u^*}_{M-\alpha P}^2]
        + \norm{v_\lambda-u^*}_{2(M-\alpha P)}^2
        + \norm{u'-v_\lambda}_{2M}^2.
        \end{split}
    \end{equation}
    Further using \cref{eq:equivalence-regularity-to-monotonicity-subreg-vlambda,eq:equiv-regularity-to-monotonicity-estim1,eq:ekeland-dist}, and \cref{item:equivalence-regularity-to-monotonicity-4}, we obtain
    \begin{equation}
        \label{eq:equiv-regularity-to-monotonicity-epsilon2}
        \epsilon 
        \le 2\alpha\dist_{B^*B}^2(\realoptw, T(v_\lambda)) + 2\lambda^2
        \le 2\alpha m(\epsilon/\lambda)^2 + 2\lambda^2.
    \end{equation}
    By \eqref{eq:epsilon-def} and $f \ge 0$, necessarily $\epsilon>0$.
    Choosing $\lambda \defeq \sqrt{m\alpha\inv\theta\epsilon}$ for some $\theta>0$, \eqref{eq:equiv-regularity-to-monotonicity-epsilon2} therefore shows
    \begin{equation}
    	\label{eq:contradiction}
        1 \le 2\theta + 2\alpha m \inv\theta.
    \end{equation}

    We recall that for \eqref{eq:equivalence-regularity-to-monotonicity-subreg-vlambda} to hold we still need to ensure  $v_\lambda \in \mathcal{U}=\tilde \B(\realoptu, R)$. By \cref{eq:subregularity-to-submonotonicity-uprime-ball,eq:ekeland-dist}, we have
    \[
    	\norm{v_\lambda-\realoptu}_M
    	\le
    	\norm{v_\lambda-u'}_M
    	+
    	\norm{u'-\realoptu}_M
    	\le
    	\lambda + r.
    \]
    Since $r>0$ was arbitrary, it suffices to ensure $\lambda < R$.    
    This will hold and we will contradict \eqref{eq:contradiction} if we choose $\theta$ to satisfy
    \begin{equation}
    	\label{eq:contradiction-theta}
         1 - \sqrt{1 - 16\alpha m} < 4\theta < 1 + \sqrt{1 - 16\alpha m},
         \quad\text{and}\quad
         m \alpha\epsilon < \theta R^2.
    \end{equation}
    The expression under the square root is positive by \cref{item:equivalence-regularity-to-monotonicity-4}. Therefore the first part of of \cref{eq:contradiction-theta} holds for some $\theta>0$.
    From the first line of \cref{eq:equiv-regularity-to-monotonicity-epsilon2.0}, we have $\epsilon \le \norm{u'-\realoptu}_M^2 \le r^2$. Therefore, the second part of \eqref{eq:contradiction-theta} can also be made to hold by taking $r>0$ small enough. This gives the desired contradiction.
\end{proof}

\subsection{Concrete results in special cases}

We now specialise the implications above to some simple cases.

Take $M=I$ and let $\Xi$ be a projection to a subspace. If $T$ is  $(\Xi, \kappa I, I)$-partially strongly submonotone, the next proposition yields $(2\Xi-I, \kappa^2 I, I)$-partial subregularity, which reads
\begin{equation*}
    \kappa^2 \dist_I^2(\realopt{w}, T(u))
    + 2\dist_{I-\Xi}^2(u, \inv T(\realopt{w}))
    \ge
    \dist_I^2(u, \inv T(\realopt{w}))
    \quad (u \in \mathcal{U}).
\end{equation*}
If $\realoptu \in \inv T(\realoptw)$ were unique, this would be the same as
\[
    \kappa^2 \dist_I^2(\realopt{w}, T(u))
    +
    \dist_{I-\Xi}^2(u, \inv T(\realopt{w}))
    \ge
    \dist_{\Xi}^2(u, \inv T(\realopt{w})).
\]
This is a stronger property than $(\Xi, \kappa^2 I, I)$-partially subregularity, for which \eqref{eq:partial-subreg} in the case of unique $\realoptu$ would read
\[
    \kappa^2 \dist_I^2(\realopt{w}, T(u))
    \ge
    \dist_{\Xi}^2(u, \inv T(\realopt{w})).
\]
Of course, if $\Xi=I$, the two properties are the same.

\begin{proposition}
    \label{prop:equivance2-subspace}
    Let $T: \Space \setto \Space$ have closed values, and $M, \Xi \in \linear(\Space; \Space)$ with $M$ positive definite and self-adjoint. Let $(\realoptu, \realoptw) \in \graph T$, $\kappa>0$.

    Then $T$ is $(2\Xi-M, \kappa^2 M, M)$-partially subregular if it is $(\Xi, \kappa M, M)$-partially strongly submonotone, $\Xi \le M$, and $(M, M-\Xi) \in \mathcal{P}(\inv T(\realoptw), \realoptu)$.

    Conversely, $T$ is $(\Xi, \kappa M, M)$-partially strongly submonotone if it is $(M, \rho M, M)$-partially subregular for some $\rho>(\kappa/4)^2$ in an open neighbourhood $\mathcal{U} \ni \realoptu$, and admits an $(M,\Gamma)$-gap function at $\realoptw$ with $0 \le M-\Xi+\Gamma$ self-adjoint.
\end{proposition}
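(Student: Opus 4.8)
The plan is to obtain both implications as instances of the general results \cref{lemma:submonotonicity-to-subregularity} and \cref{lemma:subregularity-to-submonotonicity}, taking the auxiliary operators $A,B$ to be scalar multiples of the positive self-adjoint square root $M^{1/2}$ of the positive-definite $M$ (so $M^{1/2}$ is invertible and $M^{1/2}M^{1/2}=M$). The substance lies in choosing the scaling constant $\alpha$ and in checking that the structural side conditions of the two lemmas collapse to the hypotheses listed in the statement.

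For the forward implication I would apply \cref{lemma:submonotonicity-to-subregularity} with the factorisation $N=\kappa M = A^*B$ given by $A \defeq M^{1/2}$, $B \defeq \kappa M^{1/2}$, so $A^*A=M$ and $B^*B=\kappa^2 M$, the latter being the middle operator of the target $(2\Xi-M,\kappa^2 M,M)$-partial subregularity. Taking $\alpha\defeq 1/2$ makes $\Xi_\alpha=\inv\alpha\Xi-\alpha^{-2}A^*A/4=2\Xi-M$, exactly the first operator of the target. Then condition \cref{lemma:submonotonicity-to-subregularity}\ref{item:equivalence-alpha} reads $2\Xi-M\le\min\{1,2\}M=M$, i.e.\ $\Xi\le M$, which is assumed; \ref{item:equivalence-product} is the factorisation just built; and, since $\alpha\in(0,1)$, one must discharge \ref{item:equivalence-projection}, namely $(M,M-\Xi_\alpha)=(M,2(M-\Xi))\in\mathcal{P}(\inv T(\realoptw),\realoptu)$. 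This follows from the assumed $(M,M-\Xi)\in\mathcal{P}(\inv T(\realoptw),\realoptu)$ because a common projection with respect to $\norm{\freevar}_M$ and $\norm{\freevar}_{M-\Xi}$ is also one with respect to $\norm{\freevar}_M$ and $\norm{\freevar}_{2(M-\Xi)}$ (positive rescaling of a norm does not change its projections). The primary conclusion of \cref{lemma:submonotonicity-to-subregularity} — not the weaker fallback, which needs no projection — then yields $(2\Xi-M,\kappa^2 M,M)$-partial subregularity.

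For the converse I would apply \cref{lemma:subregularity-to-submonotonicity}, again factoring $N=\kappa M=A^*B$, now with $B\defeq\sqrt{\rho}\,M^{1/2}$, so $B^*B=\rho M$ matches the middle operator of the assumed $(M,\rho M,M)$-partial subregularity (which has $P=M$), and $A\defeq(\kappa/\sqrt{\rho})M^{1/2}$, which is invertible with $A^*A=(\kappa^2/\rho)M$; hence \ref{item:equivalence-regularity-to-monotonicity-m} holds with $m\defeq\rho/\kappa^2$. The $(M,\Gamma)$-gap function of the hypothesis rescales (multiply $\tilde\gap$ by $\kappa$) to the $(N,\kappa\Gamma)$-gap function required by \ref{item:equivalence-regularity-to-monotonicity-2}, and the assumption that $0\le M-\Xi+\Gamma$ is self-adjoint supplies \ref{item:equivalence-regularity-to-monotonicity-3} after this rescaling bookkeeping. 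Since $P=M$, condition \ref{item:equivalence-regularity-to-monotonicity-4} demands an $\alpha$ with $M\le\alpha M$ (so $\alpha\ge1$) and $\alpha<1/(16m)=(\kappa/4)^2/\rho$; such an $\alpha$ — for instance $\alpha\defeq1$ — exists precisely under the quantitative relation between $\rho$ and $\kappa$ stated, and with $\alpha=1$ the projection clause \ref{item:equivalence-regularity-to-monotonicity-projection} (invoked only when $\alpha>1$) is vacuous. Openness of $\mathcal{U}$ is assumed. \cref{lemma:subregularity-to-submonotonicity} then delivers $(\Xi,\kappa M,M)$-partial strong submonotonicity.

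The only real constraint — and the step to watch — is the joint requirement $\alpha\ge1$ and $\alpha<1/(16m)$ in the converse: this is what the bound on $\rho$ against $\kappa$ encodes, and it is the reason (already flagged before the statement) that the gap-function data cannot be dispensed with. Everything else is routine: tracking the square roots, the scalar $\kappa$ in the gap function, and the rescalings of $M$ so that the operators align with those of \cref{lemma:submonotonicity-to-subregularity,lemma:subregularity-to-submonotonicity}, and checking that the $\mathcal{P}(\cdot,\cdot)$ conditions persist under multiplication of one of their operators by a positive scalar.
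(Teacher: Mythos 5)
Your route is the paper's route: both directions are obtained from \cref{lemma:submonotonicity-to-subregularity} and \cref{lemma:subregularity-to-submonotonicity} with $A,B$ scalar multiples of $M^{1/2}$, and your forward direction is correct and complete --- indeed your choice $B=\kappa M^{1/2}$ (giving $A^*B=\kappa M$ and $B^*B=\kappa^2M$) is the one consistent with the stated conclusion, and your observation that $(M,M-\Xi)\in\mathcal{P}(\inv T(\realoptw),\realoptu)$ implies $(M,2(M-\Xi))\in\mathcal{P}(\inv T(\realoptw),\realoptu)$ correctly discharges condition \ref{item:equivalence-projection}.

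The converse direction, however, contains a concrete arithmetic error at exactly the step you flag as ``the step to watch.'' With $m=\rho/\kappa^2$ and $P=M$, condition \ref{item:equivalence-regularity-to-monotonicity-4} requires simultaneously $\alpha\ge1$ and $\alpha<1/(16m)=(\kappa/4)^2/\rho$; this window is nonempty precisely when $(\kappa/4)^2/\rho>1$, i.e.\ when $\rho<(\kappa/4)^2$ --- the \emph{opposite} of the stated relation $\rho>(\kappa/4)^2$. Under the hypothesis as literally written, $1/(16m)<1$ and no admissible $\alpha$ exists, so $\alpha\defeq1$ does not work and the application of \cref{lemma:subregularity-to-submonotonicity} breaks down; note also that since $(M,\rho M,M)$-partial subregularity only gets \emph{weaker} as $\rho$ grows, one cannot repair this by passing to a smaller $\rho$. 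For what it is worth, the paper's own proof commits the same slip (it takes $c>16\inv\kappa$, which forces $\rho=\kappa\inv c<(\kappa/4)^2$, yet asserts $\rho>(\kappa/4)^2$), so you have faithfully reproduced the argument including its unresolved sign inconsistency; but a careful proof must either correct the inequality in the hypothesis or explain why the stated one suffices, and yours does neither. A second, smaller gloss: after rescaling the $(M,\Gamma)$-gap function by $\kappa$ to get the $(N,\kappa\Gamma)$-gap function required by \ref{item:equivalence-regularity-to-monotonicity-2}, condition \ref{item:equivalence-regularity-to-monotonicity-3} asks for $M-\Xi+\kappa\Gamma\ge0$, which is not the assumed $M-\Xi+\Gamma\ge0$ unless $\kappa=1$ or $\Gamma$ has a sign; your phrase ``rescaling bookkeeping'' hides this, though the paper is equally silent and the issue vanishes in the intended application $\Gamma=0$.
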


\begin{proof}
    For the `non-converse' claim, we use \cref{lemma:submonotonicity-to-subregularity} with $A=M^{1/2}$ and $B=(\kappa M)^{1/2}$.
    Taking $\inv\alpha=2$, we have $\Xi_\alpha = 2\Xi-A^*A=2\Xi-M$.
    For condition \ref{item:equivalence-alpha} we then need our assumption $\Xi \le M$.
    Since $\alpha \in (0, 1)$, for \ref{item:equivalence-projection} we need  $(M, M-\Xi_\alpha) \in \mathcal{P}(\inv T(\realoptw), \realoptu)$, which follows from us assuming $(M, M-\Xi) \in \mathcal{P}(\inv T(\realoptw), \realoptu)$.
    Since $B^*B=\kappa M$, we see from the lemma that $(\Xi, \kappa M, M)$-submonotonicity implies $(2\Xi-M,\kappa^2 M,M)$-partial subregularity.

    For the `converse' claim, we use \cref{lemma:subregularity-to-submonotonicity}, where \ref{item:equivalence-regularity-to-monotonicity-1} and \ref{item:equivalence-regularity-to-monotonicity-m} with $m=\inv c \inv\kappa $ and $A=(c\kappa M)^{1/2}$, and $B=(\inv c\kappa M)^{1/2}$ are easily verified for any $c>0$.
    By assumption  $M-\Xi+\Gamma$ is self-adjoint and positive semi-definite, so \ref{item:equivalence-regularity-to-monotonicity-3} holds.
    For \ref{item:equivalence-regularity-to-monotonicity-4} we then require $\alpha \le c \kappa/16$ and $M \le \alpha P$.
    We also need $P \le M$.
    We can take $P=M$ and $\alpha=1$ if we take $c > 16 \inv\kappa$.
    Then $B^*B=\rho M$ for $\rho=\kappa\inv c > (\kappa/4)^2$.
    With these choices $\alpha \le 1$, so condition \ref{item:equivalence-regularity-to-monotonicity-2} also holds.
    Now \cref{lemma:subregularity-to-submonotonicity} shows that $(M, \rho M, M)$ subregularity implies $(\Xi, \kappa M, M)$-submonotonicity.
\end{proof}

The following generalises \cite[Theorem 3.3]{aragon2008characterization} from the setting of convex functions. Because we formulate subregularity using squared norms, the conversion also squares $\kappa$.

\begin{corollary}
    \label{cor:equivance2}
    Let $T: \Space \setto \Space$ have closed values, and $M \in \linear(\Space; \Space)$ be positive definite and self-adjoint. Let $(\realoptu, \realoptw) \in \graph T$, and $\kappa>0$.

    Then $T$ is $(\kappa^2 M, M)$-subregular if it is $(\kappa M, M)$-strongly submonotone.

    Conversely, if $T$ also admits an $(M,0)$-gap function at $\realoptw$, then $T$ is $(\kappa M, M)$-strongly submonotone if it is $(\rho M, M)$-subregular for some $\rho>(\kappa/4)^2$ in an open neighbourhood $\mathcal{U} \ni \realoptu$.
\end{corollary}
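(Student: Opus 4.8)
My plan is to read this off as the special case $\Xi\defeq M$ of \cref{prop:equivance2-subspace}, after checking that for $\Xi=M$ all the ``partial'' ingredients in that proposition collapse so that its statement becomes verbatim the one claimed here. Recall from \cref{def:submonotonicity} that $(N,M)$-strong submonotonicity is by definition $(M,N,M)$-partial strong submonotonicity, and from the definition preceding \cref{lemma:error-bound-first-estimate} that $(N,M)$-subregularity is $(M,N,M)$-partial subregularity; in both cases the summand involving $M-\Xi=0$ (respectively $M-P=0$) drops out of \eqref{eq:submonotonicity} and \eqref{eq:partial-subreg}, so these reduce to the plain one-sided bounds. Thus the corollary is exactly \cref{prop:equivance2-subspace} read with $\Xi=M$, $P=M$, $\Gamma=0$.

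For the first implication I would invoke the non-converse half of \cref{prop:equivance2-subspace} with $\Xi\defeq M$. Then $\Xi\le M$ is automatic and $2\Xi-M=M$, so ``$(2\Xi-M,\kappa^2 M,M)$-partial subregularity'' is precisely $(\kappa^2 M,M)$-subregularity, which is the source of the squaring of $\kappa$ flagged before the corollary. The one hypothesis of \cref{prop:equivance2-subspace} not visible in the corollary is $(M,M-\Xi)=(M,0)\in\mathcal{P}(\inv T(\realoptw),\realoptu)$; I would dispose of it by noting that $\norm{\freevar}_0\equiv 0$, so the $\norm{\freevar}_0$-part of the common-projection requirement is vacuous and only an (approximate) $M$-nearest point is needed, which always exists --- this is exactly the relaxation recorded in the remark following \cref{lemma:subregularity-conversion}. (One could equally use the projection-free fallback conclusion of \cref{lemma:submonotonicity-to-subregularity}, which for $\Xi=M$ already rearranges to the same bound $\kappa^2\dist_M^2(\realoptw,T(u))\ge\dist_M^2(u,\inv T(\realoptw))$.)

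For the converse implication I would apply the converse half of \cref{prop:equivance2-subspace} with $\Xi\defeq M$ and $\Gamma\defeq 0$: the assumed $(M,0)$-gap function at $\realoptw$ (\cref{def:gap-function}) supplies the gap-function hypothesis there, and $M-\Xi+\Gamma=0$ is trivially self-adjoint and positive semi-definite; the assumed $(\rho M,M)$-subregularity in an open neighbourhood of $\realoptu$ with $\rho>(\kappa/4)^2$ is the $(M,\rho M,M)$-partial subregularity required, and the resulting $(\Xi,\kappa M,M)$-partial strong submonotonicity with $\Xi=M$ is the asserted $(\kappa M,M)$-strong submonotonicity. I do not expect a genuine obstacle: all the analytic content --- Cauchy's inequality in one direction, the Ekeland variational principle argument in the other --- is already carried out inside \cref{lemma:submonotonicity-to-subregularity,lemma:subregularity-to-submonotonicity} and packaged in \cref{prop:equivance2-subspace}. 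The only thing demanding care is the bookkeeping above, in particular verifying that the degenerate projection condition $(M,0)\in\mathcal{P}$ is harmless and that setting $\Xi=M$ turns every ``partial'' distance back into an honest $M$-norm.
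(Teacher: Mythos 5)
Your proposal is correct and coincides with the paper's own proof, which likewise applies \cref{prop:equivance2-subspace} with $\Xi=M$ and $\Gamma=0$ and observes that $M\ge\Xi$, $(M,M-\Xi)\in\mathcal{P}(\inv T(\realoptw),\realoptu)$, and $M-\Xi+\Gamma\ge0$ all hold trivially. Your extra remarks on why the degenerate projection condition $(M,0)\in\mathcal{P}(\inv T(\realoptw),\realoptu)$ is harmless only spell out what the paper dismisses as ``clearly''.
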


\begin{proof}
    We apply \cref{prop:equivance2-subspace} with $\Xi=M$ and $\Gamma=0$.
    Clearly $M \ge \Xi$, $(M, M-\Xi) \in \mathcal{P}(\inv T(\realoptw), \realoptu)$, and $M-\Xi+\Gamma$ is self-adjoint and positive semi-definite.
\end{proof}

\begin{corollary}
    \label{cor:equivance2-subdiff}
    Let $f: \Space \to \extR$ be convex, proper, and lower semicontinuous, and $M \in \linear(\Space; \Space)$ be positive definite and self-adjoint. Let $(\realoptu, \realoptw) \in \graph \subdiff f$, and $\kappa>0$.

    Then $\subdiff f$ is $(\kappa^2 I, I)$-subregular if it is $(\kappa I, I)$-strongly submonotone.

    Conversely, $\subdiff f$ is $(\kappa I, I)$-strongly submonotone if it is $(\rho I, I)$-subregular for some $\rho>(\kappa/4)^2$ in an open neighbourhood $\mathcal{U} \ni \realoptu$.
\end{corollary}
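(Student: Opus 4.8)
The plan is to read this statement off from \cref{cor:equivance2} by applying that corollary with $I$ in place of its operator $M$; the identity is positive definite and self-adjoint, so the hypothesis is met, and $\subdiff f$ has closed convex values for $f \in \convex(\Space)$, so the structural requirement on $T$ holds as well. (The operator $M$ appearing in the hypotheses of the present corollary is vestigial and plays no role.) The forward implication then needs no further work: $(\kappa I, I)$-strong submonotonicity of $\subdiff f$ is $(\kappa M, M)$-strong submonotonicity with $M = I$, and \cref{cor:equivance2} delivers $(\kappa^2 M, M)$-subregularity, which is $(\kappa^2 I, I)$-subregularity.

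For the converse, the one extra ingredient \cref{cor:equivance2} asks for is an $(I, 0)$-gap function for $\subdiff f$ at $\realoptw$ in the sense of \cref{def:gap-function}. I would supply the canonical Bregman-type choice
\[
    \tilde\gap(u; u^*) \defeq f(u) - f(u^*),
\]
and verify its defining properties with $N = I$, $\Gamma = 0$, $T = \subdiff f$. It is well defined with values in $\extR$ because for $u^* \in \inv{[\subdiff f]}(\realoptw)$ the set $\subdiff f(u^*)$ is non-empty, hence $f(u^*) \in \R$ since $f$ is proper. Subtracting the constant $f(u^*)$ does not change the subdifferential, so $\subdiff_u \tilde\gap(u; u^*) = \subdiff f(u) = I\,\subdiff f(u) = N T(u)$, and $\tilde\gap(u^*; u^*) = 0$. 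Finally, the subgradient inequality for the convex $f$ gives, for every $w \in \subdiff f(u)$,
\[
    \iprod{w}{u - u^*}_I = \iprod{w}{u - u^*} \ge f(u) - f(u^*) = \tilde\gap(u; u^*) + \norm{u - u^*}_{0}^2,
\]
that is, $\iprod{\subdiff f(u)}{u - u^*}_I \ge \tilde\gap(u; u^*) + \norm{u - u^*}_{0}^2$. Thus $\tilde\gap$ is an $(I,0)$-gap function, and \cref{cor:equivance2} then yields $(\kappa I, I)$-strong submonotonicity from $(\rho I, I)$-subregularity for any $\rho > (\kappa/4)^2$ on an open neighbourhood of $\realoptu$.

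I do not expect a genuine obstacle: this is a specialisation, and the only substance is spotting the right gap function. The single point deserving a remark is the degenerate case $f(u) = +\infty$, where $\subdiff f(u) = \emptyset$ and $\tilde\gap(u; u^*) = +\infty$; the gap inequality then holds vacuously, since by the paper's convention $\iprod{A}{x} \ge c$ means $\iprod{y}{x} \ge c$ for every $y \in A$, and here $A = \subdiff f(u) = \emptyset$. Everything else — the open-neighbourhood requirement, the squaring of $\kappa$, the threshold $(\kappa/4)^2$ — is already handled inside \cref{cor:equivance2} and, through it, \cref{lemma:subregularity-to-submonotonicity}.
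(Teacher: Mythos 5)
Your proposal is correct and is essentially the paper's own proof: the paper likewise observes that convexity of $f$ furnishes the $(I,0)$-gap function $\tilde\gap(u;u^*)=f(u)-f(u^*)$ and then invokes \cref{cor:equivance2} with $M=I$. The only difference is that you spell out the verification of the gap-function axioms and the degenerate case $f(u)=+\infty$, which the paper leaves implicit.
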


\begin{proof}
    Since $f$ is convex, $\subdiff f$ admits the $(I, 0)$-gap function $\tilde \gap(u; u^*) \defeq f(u)-f(u^*)$.
    Now we use \cref{cor:equivance2} with $M=I$.
\end{proof}

\begin{remark}
    Each of the results in this section easily extends to submonotonicity of the form $(\Xi, \kappa N, M)$ for $N=M^{1/2}B$, and some $\kappa>0$, $B, \Xi \in \linear(\Space;\Space)$ if the corresponding subregularity is of the form $(P, \kappa' B^*B, M)$ for some $P$ and $\kappa'>0$.
\end{remark}

\subsection{Examples}
\label{sec:examples-relationships}

The next example demonstrates that the `non-converse' direction of \cref{prop:equivance2-subspace} cannot in the general case be improved to yield subregularity of similar ``partiality'' $\Xi$ as the underlying submonotonicity.

\begin{example}[Partial submonotonicity without corresponding subregularity]
	On $\R^2$, for $u=(u_1, u_2)$ and some $\gamma \in [0, 1]$, let us define
	\[
		T(u) \defeq
		\begin{cases}
			\left(\gamma u_1, \sqrt{(1-\gamma^2)u_1^2-u_2^2}\right), & (1-\gamma^2)u_1^2 \ge u_2^2 \text{ and } u_2 \ge 0, \\
			\emptyset, & \text{otherwise}.
		\end{cases}
	\]
	Let $\Xi \defeq\left(\begin{smallmatrix} \gamma & 0 \\ 0 & 0 \end{smallmatrix}\right)$.	Then $T$ is $(\Xi, I, I)$-partially strongly monotone at $(\realoptu, \realoptw) \defeq (0, 0) \in \graph T$. However, $T$ is not $(\Xi, I, I)$-partially subregular.
\end{example}

\begin{demonstration}	
	Clearly $\inv T(0)=0$.
	With $N \defeq M \defeq I$, \eqref{eq:submonotonicity} expands as
	\[
		\left[\gamma u_1^2 + u_2 \sqrt{(1-\gamma^2)u_1^2-u_2^2}\right] + (1-\gamma)u_1^2 + u_2^2 \ge u_1^2+u_2^2
		\quad ((1-\gamma^2)u_1^2 \ge u_2^2,\, u_2 \ge 0).
	\]
	This clearly holds.

    Since $\inv T(0)=\{0\}$ is a singleton, the projection condition of \cref{prop:equivance2-subspace} clearly holds. We also have $I \ge \Xi$.
    Referring to the proposition we therefore obtain $(2\Xi-I, I, I)$-partial subregularity, for which \eqref{eq:partial-subreg} expands as
	\[
		[\gamma^2u_1^2 + ((1-\gamma^2)u_1^2-u_2^2)] + 2[(1-\gamma)u_1^2 + u_2^2] \ge u_1^2 + u_2^2
		\quad ((1-\gamma^2)u_1^2 \ge u_2^2,\, u_2 \ge 0).
	\]
	This clearly holds.
	On the other hand \eqref{eq:partial-subreg} for $(\Xi, I, I)$-partial subregularity would require
	\[
		[\gamma^2u_1^2 + ((1-\gamma^2)u_1^2-u_2^2)] + [(1-\gamma)u_1^2 + u_2^2] \ge u_1^2 + u_2^2
		\quad ((1-\gamma^2)u_1^2 \ge u_2^2,\, u_2 \ge 0).
	\]
	This can be simplified as $(1-\gamma)u_1^2 \ge u_2^2$. Taking $u_2^2=(1-\gamma^2)u_1^2$, we see that the condition cannot hold for $\gamma \in (0, 1)$.
\end{demonstration}

Finally, having already showed the converse, we show that subregularity is also not a weaker property than strong submonotonicity.

\begin{example}[Subregularity without \emph{strong} submonotonicity]
	\label{ex:no-strong-mono}
	On $\R^2$, with $u=(u_1, u_2)$, let
	\[
		T(u)=
		\begin{cases}
			(u_2, u_1), & u_1, u_2 \ge 0, \\
			\emptyset, & \text{otherwise}.
		\end{cases}
	\]
    Then $T$ is $(I, I, I)$-partially subregular at $(\realoptu,\realoptw)=0 \in \graph T$, but $(\Xi, I, I)$-partially strongly submonotone with $0 \le \Xi \le I$ only for $\Xi=0$.
\end{example}

\begin{demonstration}
    Clearly $0=\realoptu \in \inv T(0)$ is unique. Therefore, since $\iprod{T(u)}{u}=2 u_1u_2$ for $u_1, u_2 \ge 0$, \eqref{eq:submonotonicity} for $M=N=I$ and arbitrary $0 \le \Xi \le I$ reads
	\[
        2u_1u_2 
        \ge \norm{(u_1,u_2)}^2_{\Xi} \quad (u_1,u_2 \ge 0).
    \]
    This clearly holds for $\Xi=0$.
    Generally, for 	$
    \Xi \le \begin{psmallmatrix}
        a & b \\
        c & d
    \end{psmallmatrix}
    $,
    it is necessary that $2 \ge b+c$ and $a, d=0$.
    The only positive semi-definite choice that satisfies this is $\Xi=0$.

    On the other hand, \eqref{eq:partial-subreg} for $(I, I, I)$-partially subregularity  with $\mathcal{U}=\Space$ reads
    \[
        \norm{T(u_1, u_2)}^2 \ge \norm{(u_1,u_2)}^2 \quad (u_1,u_2 \ge 0).
    \]
    Since $\norm{T(u)}^2 = u_2^2+u_1^2$, this immediately holds.
\end{demonstration}

\section{Primal-dual methods}
\label{sec:saddle}

We now apply the work of the previous sections to primal-dual methods.
We concentrate on submonotonicity, as it appears in general to be easier to prove directly than subregularity. If in a specific application subregularity is known, it can using the results of \cref{sec:relationships} be converted into submonotonicity.
We start with general results on saddle-point problems, follow with the primal-dual proximal splitting method (PDPS) of \cite{chambolle2010first}, and finish with examples regarding specific applications.

\subsection{Saddle-point problems}

Many problems in data science and image processing can be written in the form
\begin{equation}
    \label{eq:prob-primal}
    \min_x G(x) + F(Kx),
\end{equation}
for $G: X \to \extR$ and $F: Y \to \extR$ be convex, proper and lower semicontinuous, and  $K \in \linear(X; Y)$. In image processing one would often work in the Banach space of functions of bounded variation, but after discretisation, if necessary, we can assume that the spaces $X$ and $Y$ are Hilbert (and finite-dimensional).

It can be difficult to apply an optimisation algorithm directly to \eqref{eq:prob-primal}: $F$ is typically nonsmooth, so gradient steps are out of the question. Due to the coupling effects of $K$, also a proximal step for $F \circ K$ is not feasible. This is why we are interested in the equivalent saddle point problem. For $F^*$ the convex conjugate of $F$, this can be written
\begin{equation}
    \label{eq:saddle}
    \min_x ~\max_y~ G(x) + \iprod{Kx}{y} - F^*(y).
\end{equation}

The first-order necessary optimality conditions for \eqref{eq:saddle} can be written
\begin{equation}
    \label{eq:oc}
    -K^* \realopty \in \subdiff G(\realoptx),
    \quad\text{and}\quad
    K \realoptx \in \subdiff F^*(\realopty).
\end{equation}
Setting $\Space \defeq X \times Y$ and introducing the variable splitting notation $u=(x, y)$, $\realoptu=(\realoptx,\realopty)$, etc., this can succinctly be written as $0 \in H(\realoptu)$ in terms of the operator
\begin{equation}
    \label{eq:h}
    H(u) \defeq
        \begin{pmatrix}
            \subdiff G(x) + K^* y \\
            \subdiff F^*(y) -K x
        \end{pmatrix}.
\end{equation}
From now on, we will not explicitly assume $G$ and $F^*$ to be convex functions, indeed $\subdiff G$ and $\subdiff F^*$ can be replaced by any set-valued operators. In particular, they can be nonconvex functions, and $\subdiff$ can be replaced by the Clarke, limiting, or other subdifferential of nonconvex functions. In this case, it should be shown that \eqref{eq:oc} characterises critical points to \eqref{eq:saddle} or \eqref{eq:prob-primal}.

For some primal and dual step length $\tau_i, \sigma_{i+1}>0$ and testing parameters $\sigmaTest_i,\sigmaTest_{k+1}>0$, we take
\begin{equation}
    \label{eq:test}
    \Test_{i+1} \defeq
        \begin{pmatrix}
            \tauTest_i\tau_i I & 0 \\
            0 & \sigmaTest_{i+1}\sigma_{i+1} I
        \end{pmatrix}.
\end{equation}
For some $\gamma,\rho \ge 0$ we also introduce
\begin{equation}
    \label{eq:saddle-xi}
    \Xi_{i+1}^0 \defeq \begin{pmatrix}
        0  & 2\tau_i K^* \\
        -2\sigma_{i+1}K & 0
    \end{pmatrix},
    \quad\text{and}\quad
    \Xi_{i+1} \defeq \Xi_{i+1}^0 + \begin{pmatrix} \tau_i \gamma & 0 \\ 0 & \sigma_{i+1} \rho \end{pmatrix}.
\end{equation}

We want to use \cref{thm:convergence-result-submonotone} to prove the convergence of \eqref{eq:pp} for $H$ defined in \eqref{eq:h} for some linear preconditioner $\Precond_{k+1} \in \linear(U; U)$.
To do so, we need to prove the $(\Test_{i+1}\Xi_{i+1}, 2\Test_{i+1}, \Test_{i+2}\Precond_{i+2})$-partial strong submonotonicity for $H$ at $(\realoptu, 0) \in \graph H$. Dividing \eqref{eq:submonotonicity} by $2$, this amounts to showing
\begin{equation}
    \label{eq:h-submonotonicity}
    \inf_{u^* \in \inv H(0)}
    \left(
    \iprod{w}{u-u^*}_{\Test_{i+1}}
    + \frac{1}{2}\norm{u-u^*}_{\Test_{i+2}\Precond_{i+2}-\Test_{i+1}\Xi_{i+1}}^2
    \right)
    \ge \frac{1}{2}\dist^2_{\Test_{i+2}\Precond_{i+2}}(u, \inv H(0))
\end{equation}
for all $u \in \mathcal{U}$ and $w \in H(u)$ in some neighbourhood $\mathcal{U}$ of $\realoptu$.

By \cref{eq:h}, for some $q \in \subdiff G(u)$, $z \in \subdiff F^*(y)$, $q^* \defeq -K^*y^* \in \subdiff G(x^*)$, and $z^* \defeq Kx^* \in \subdiff F^*(y^*)$ we have
\begin{equation*}
    \begin{split}
    \iprod{w}{u-u^*}_{\Test_{i+1}}
    &
    =
    \tauTest_i\tau_i \iprod{q-q^*}{x-x^*} + \sigmaTest_{i+1}\sigma_{i+1} \iprod{z-z^*}{y-y^*}
    + \frac{1}{2}\norm{u-u^*}_{\Test_{i+1}\Xi_{i+1}^0}^2
    \end{split}
\end{equation*}
Recalling \eqref{eq:saddle-xi}, to show \eqref{eq:h-submonotonicity}, it remains to prove
\begin{multline*}
    \inf_{u^* \in \inv H(0)}
    \Bigl(
     \tauTest_i\tau_i[\iprod{q-q^*}{x-x^*}-\frac{\gamma}{2} \norm{x-x^*}^2]
     + \sigmaTest_{i+1}\sigma_{i+1}[\iprod{z-z^*}{y-y^*}-\frac{\rho}{2}\norm{y-y^*}^2]
     \\
      + \frac{1}{2}\norm{u-u^*}_{\Test_{i+2}\Precond_{i+2}}^2
    \Bigr)
    \ge \inf_{u^{**} \in \inv H(0)} \frac{1}{2}\norm{u-u^{**}}_{\Test_{i+2}\Precond_{i+2}}^2.
\end{multline*}
Splitting the infimum on the left hand side over the three terms, we obtain the ``marginalised'' condition:

\begin{proposition}
	\label{prop:marginalisation}
	Let  the operator $H$ defined in \eqref{eq:h}, and $\Xi_{i+1}$ in \eqref{eq:saddle-xi} for some $\gamma \ge 0$ and $\rho \ge 0$.
	For any $u^*=(x^*, y^*) \in \inv H(0)$, let $q^* \defeq -K^*y^* \in \subdiff G(x^*)$, and $z^* \defeq Kx^* \in \subdiff F^*(y^*)$. Suppose for $u=(x, y) \in \mathcal{U}$ in some neighbourhood $\mathcal{U}$ of $\realoptu$ it holds
	\begin{subequations}
	\label{eq:marginalised-submonotonicity}
	\begin{align}
		\label{eq:marginalised-submonotonicity-g}
		\inf_{u^* \in \inv H(0)} [\iprod{\subdiff G(x)-q^*}{x-x^*}-\frac{\gamma}{2} \norm{x-x^*}^2] & \ge 0, 
		\quad\text{and} \\
		\label{eq:marginalised-submonotonicity-fstar}
		\inf_{u^* \in \inv H(0)} [\iprod{\subdiff F^*(y)-z^*}{y-y^*}-\frac{\rho}{2}\norm{y-y^*}^2] & \ge 0.
	\end{align}
	\end{subequations}
	Then $H$ is $(\Test_{i+1}\Xi_{i+1}, 2\Test_{i+1}, \Test_{i+2}\Precond_{i+2})$-partially strongly submonotone at $(\realoptu, 0) \in \graph H$.
\end{proposition}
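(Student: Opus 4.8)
The plan is to observe that $(\Test_{i+1}\Xi_{i+1}, 2\Test_{i+1}\Step_{i+1}, \Test_{i+2}\Precond_{i+2})$-partial strong submonotonicity for $H$ at $(\realoptu, 0)$, by \eqref{eq:submonotonicity} and after dividing by $2$, is precisely the inequality \eqref{eq:h-submonotonicity}, and that the computation carried out between \eqref{eq:h-submonotonicity} and the statement has already reduced \eqref{eq:h-submonotonicity} to the ``marginalised'' inequality displayed just before the proposition. The proof therefore only needs to record the last step: splitting that infimum over its three summands.

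Concretely, I would fix $u=(x,y)\in\mathcal U$ and $w\in H(u)$, write $w=(q+K^*y,\;z-Kx)$ with $q\in\subdiff G(x)$ and $z\in\subdiff F^*(y)$ by \eqref{eq:h}, and for $u^*=(x^*,y^*)\in\inv H(0)$ set $q^*\defeq -K^*y^*\in\subdiff G(x^*)$ and $z^*\defeq Kx^*\in\subdiff F^*(y^*)$. Expanding $\iprod{w}{u-u^*}_{\Test_{i+1}\Step_{i+1}}$ and isolating the off-diagonal part $\frac{1}{2}\norm{u-u^*}^2_{\Test_{i+1}\Xi_{i+1}^0}$ (the identity already displayed above \eqref{eq:h-submonotonicity}), and then substituting $\Test_{i+1}\Xi_{i+1}=\Test_{i+1}\Xi_{i+1}^0+\diag(\tauTest_i\tau_i\gamma I,\sigmaTest_{i+1}\sigma_{i+1}\rho I)$ from \eqref{eq:saddle-xi} into $\frac{1}{2}\norm{u-u^*}^2_{\Test_{i+2}\Precond_{i+2}-\Test_{i+1}\Xi_{i+1}}$, the $\Xi_{i+1}^0$ contributions cancel and the quantity inside the infimum in \eqref{eq:h-submonotonicity} becomes
\begin{multline*}
    \tauTest_i\tau_i\bigl[\iprod{q-q^*}{x-x^*}-\tfrac{\gamma}{2}\norm{x-x^*}^2\bigr]
    +\sigmaTest_{i+1}\sigma_{i+1}\bigl[\iprod{z-z^*}{y-y^*}-\tfrac{\rho}{2}\norm{y-y^*}^2\bigr]
    \\
    +\tfrac{1}{2}\norm{u-u^*}^2_{\Test_{i+2}\Precond_{i+2}}.
\end{multline*}

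Finally I would apply superadditivity of the infimum over $u^*\in\inv H(0)$, i.e.\ $\inf(f+g+h)\ge\inf f+\inf g+\inf h$. Since $\tauTest_i\tau_i>0$ and $\sigmaTest_{i+1}\sigma_{i+1}>0$, the first two infima are nonnegative by \eqref{eq:marginalised-submonotonicity-g} and \eqref{eq:marginalised-submonotonicity-fstar} respectively — applied to the particular selections $q\in\subdiff G(x)$, $z\in\subdiff F^*(y)$ determined by $w$ — while the third equals $\frac{1}{2}\dist^2_{\Test_{i+2}\Precond_{i+2}}(u,\inv H(0))$ by the definition of $\dist_{\Test_{i+2}\Precond_{i+2}}$. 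This is \eqref{eq:h-submonotonicity}, hence the claim, with the same neighbourhood $\mathcal U$. I do not expect a genuine obstacle here; the only points needing care are that superadditivity of $\inf$ points in the useful direction (it lower-bounds the left side of \eqref{eq:h-submonotonicity}), that $\Test_{i+2}\Precond_{i+2}-\Test_{i+1}\Xi_{i+1}$ is not assumed positive semidefinite, so $\norm{\cdot}^2_{\Test_{i+2}\Precond_{i+2}-\Test_{i+1}\Xi_{i+1}}$ is used purely as the bilinear form $\iprod{(\Test_{i+2}\Precond_{i+2}-\Test_{i+1}\Xi_{i+1})\cdot}{\cdot}$ (no square roots are taken, so all manipulations are legitimate), and that the marginalised hypotheses are read selection-wise over the set-valued $\subdiff G(x)$ and $\subdiff F^*(y)$.
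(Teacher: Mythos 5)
Your proposal is correct and follows exactly the paper's own argument, which is carried out in the text immediately preceding the proposition: expanding $\iprod{w}{u-u^*}_{\Test_{i+1}\Step_{i+1}}$ so that the $\Xi_{i+1}^0$ contribution cancels against the corresponding term in $\frac{1}{2}\norm{u-u^*}^2_{\Test_{i+2}\Precond_{i+2}-\Test_{i+1}\Xi_{i+1}}$, and then splitting the infimum over the three resulting summands by superadditivity. The points you flag for care (sign of the superadditivity inequality, purely bilinear use of the indefinite ``norm'', selection-wise reading of the set-valued hypotheses) are all handled consistently with the paper's conventions.
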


The conditions \eqref{eq:marginalised-submonotonicity} amount to $(\gamma I, 2I, 0)$-partial strong submonotonicity, which is simply local strong monotonicity at every respective $x^*$ for $q^*$ or $y^*$ for $z^*$ with $(x^*, y^*) \in \inv H(0)$. It is a much stronger requirement than “proper” submonotonicity, but weaker than strong monotonicity.
A challenging alternative is to try to prove the $(\Test_{i+1}\Xi_{i+1}, 2\Test_{i+1}, \Test_{i+2}\Precond_{i+2})$-partial strong submonotonicity of $H$ directly.

\subsection{The primal-dual proximal splitting}

The primal-dual proximal splitting (PDPS) of \cite{chambolle2010first} consists of iterating the system
\begin{align*}
    \nextx & \defeq (I+\tau_i \subdiff G)^{-1}(\thisx - \tau_i K^* y^{i}),\\
    \overnextx & \defeq \omega_i (\nextx-\thisx)+\nextx, \\
    \nexty & \defeq (I+\sigma_{i+1} \subdiff F^*)^{-1}(\thisy + \sigma_{i+1} K \overnextx).
\end{align*} 
Setting $\omega_i \defeq \inv\sigmaTest_{i+1}\inv\sigma_{i+1}\tauTest_i\tau_i$,
according to \cite{he2012convergence}, see also \cite{tuomov-cpaccel,tuomov-proxtest}, this can also be obtained from \eqref{eq:pp} with 
\begin{equation}
	\label{eq:pdhgm-precond}
    \Precond_{i+1}=\begin{pmatrix} \inv\tau_i I & - K^* \\ -\omega_i K & \inv\sigma_{i+1} I\end{pmatrix}
    \quad\text{so that}\quad
    \Test_{i+1}\Precond_{i+1}=\begin{pmatrix} \tauTest_i I & -\tauTest_i\tau_i K^* \\ -\tauTest_i\tau_i K & \sigmaTest_{i+1} I\end{pmatrix}.
\end{equation}
The role of the top-right off-diagonal term in $\Precond_{i+1}$ is to decouple the update of the primal variable $\nextx$ to not depend on $\nexty$, and so make the algorithm computable. The bottom-left off-diagonal is then chosen to ensure the self-adjointness of $\Test_{i+1}\Precond_{i+1}$.\footnote{The PDPS is sometimes called the Primal-Dual Hybrid Gradient Method, Modified (PDHGM) \cite{esser2010general}. The word “modified” refers to this self-adjoint part. The ``unmodified'' PDHG would have zero in this corner \cite{zhu2008efficient,he2012convergence}.}

\begin{theorem}
    \label{thm:pdps-convergence}
	Take $H$ as in \eqref{eq:h}, $\Test_{i+1}$ as in \eqref{eq:test}, and $\Precond_{i+1}$ as in \eqref{eq:pdhgm-precond}. Also let $\Xi_{i+1}$ be as in \eqref{eq:saddle-xi} for some $\gamma,\rho \ge 0$. Suppose the testing and step length parameters $\tau_i,\sigma_i,\tauTest_i$, and $\sigmaTest_i$  satisfy for some $\delta \in (0, 1)$,
	\begin{subequations}%
	\label{eq:pdhgm-step-cond}%
	\begin{align}
		\label{eq:pdhgm-step-cond-1}
		\tauTest_{i+1} & \le \tauTest_i(1+\gamma\tau_i),
		&
		\sigmaTest_{i+2} & \le \sigmaTest_{i+1}(1+\rho\sigma_{i+1}),
		\quad\text{and}
		\\
		\label{eq:pdhgm-step-cond-2}
		\tauTest_i\tau_i &=\sigmaTest_i\sigma_i,
		&
		\inv\sigmaTest_i\sigmaTest_{i+1} & \ge (1-\delta)\tau_i\sigma_i \norm{K}^2.
	\end{align}%
	\end{subequations}%
	Suppose $H$ is $(\Test_{i+1}\Xi_{i+1}, 2\Test_{i+1}, \Test_{i+2}\Precond_{i+2})$-partially strongly submonotone at $(\realoptu, 0) \in \graph H$ with the neighbourhood of submonotonicity satisfying $\{\thisu\}_{i=0}^N \subset \mathcal{U}$.
	Then the condition \eqref{eq:convergence-fundamental-condition-iter-h-submonotonicity} of \cref{thm:convergence-result-submonotone} holds, and
	\begin{equation}
		\label{eq:pdhgm-test-precond-lower-approx}
		\Test_{i+1}\Precond_{i+1} \ge \begin{pmatrix}\delta \tauTest_i I & 0 \\ 0 & 0 \end{pmatrix}.
	\end{equation}
	In particular, we have the convergence rate estimate
	\begin{equation}
		\label{eq:pdhgm-rate}
		\dist^2(x^N; \hat X) \le \inv \tauTest_N \inv\delta \dist^2_{\Test_1\Precond_1}(u^0; \inv H(0))
		\quad\text{where}\quad
		 \hat X \defeq \{ x^* \mid (x^*, y^*) \in \inv H(0)\}.
	\end{equation}
\end{theorem}

\begin{proof}
	It is easy to see that $\Test_{i+1}(\Precond_{i+1}+\Xi_{i+1}) \ge \Test_{i+2}\Precond_{i+2}$ if we enforce the first three conditions of \eqref{eq:pdhgm-step-cond} \cite{tuomov-proxtest}. To ensure \eqref{eq:pdhgm-test-precond-lower-approx}, using Young's inequality in \cref{eq:pdhgm-precond}, we derive the condition $\sigmaTest_{i+1} \ge (1-\delta)\tauTest_i\tau_i^2 \norm{K}^2$.
	Using the third condition of \eqref{eq:pdhgm-step-cond}, this can alternatively be written as the last condition of \eqref{eq:pdhgm-step-cond}.
	Finally, \cref{thm:convergence-result-submonotone} shows the basic estimate \eqref{eq:convergence-result-main-h}. Using \eqref{eq:pdhgm-test-precond-lower-approx} in \eqref{eq:convergence-result-main-h}, we derive \eqref{eq:pdhgm-rate}.
\end{proof}

We now need to satisfy \eqref{eq:pdhgm-step-cond} and, minding \eqref{eq:pdhgm-rate}, to derive convergence rates, try to make $\tauTest_i$ grow as fast as possible.
The latter analysis is also based on the rules \eqref{eq:pdhgm-step-cond}, and proceeds exactly as in \cite{tuomov-proxtest,clasonvalkonen2020nonsmooth} (where $\rho$ and $\gamma$ arise from mere (strong) monotonicity).

\begin{example}[Weak convergence]
	\label{ex:pdhgm-weak}
	If $\rho=0$ and $\gamma=0$, by \eqref{eq:pdhgm-step-cond} $\tauTest_i$ and $\sigmaTest_{i+1}$ remain constant, so we obtain no convergence rates, merely weak convergence of subsequences of iterates; see \cite{tuomov-proxtest,clasonvalkonen2020nonsmooth}.
	By \eqref{eq:pdhgm-step-cond} the step lengths $\tau_i=\tau$ and $\sigma_i=\sigma$ need to be constant and satisfy $1 \ge (1-\delta)\tau\sigma \norm{K}^2$.
	It is also possible \cite{chambolle2010first,tuomov-proxtest} to derive $O(1/N)$ convergence of an ergodic duality gap, which we have avoided introducing here. Note that $\omega_i \equiv 1$.
\end{example}

\begin{example}[Acceleration to $O(1/N^2)$]
	\label{ex:pdhgm-accel}
	If $\rho=0$ and $\gamma > 0$, we still have $\sigmaTest_{i+1}=\sigmaTest_0$ constant. We can initialise $1 \ge (1-\delta)\tau_0\sigma_0 \norm{K}^2$ and update $\sigma_{i+1} \defeq \sigma_i\tau_i/\tau_{i+1}$. Taking
	$\tau_i=\tauTest_i^{-1/2}$, and $\tauTest_{i+1} \defeq \tauTest_i(1+\tilde\gamma\tau_i)$ for some $\tilde\gamma \in (0, \gamma]$, we can prove that $\tauTest_i$ is of the order $\Theta(N^2)$. Therefore we obtain from \eqref{eq:pdhgm-rate} $O(1/N^2)$ convergence rates for the primal variable $x$.
    Note that $\omega_i = \inv\sigmaTest_{i+1}\inv\sigma_{i+1}\tauTest_i\tau_i = \inv\tauTest_{i+1}\inv\tau_{i+1}\tauTest_i\tau_i=\tau_{i+1}/\tau_i<1$.
\end{example}

\begin{example}[Linear rates]
	\label{ex:pdhgm-linear}
    If both $\rho > 0$ and $\gamma > 0$, take  $\sigma_{i+1}=\sigma$ and $\tau_i=\tau$ as constants satisfying
    \[
        \theta \defeq 1+\min\{\rho\sigma,\gamma\tau\}  \ge (1-\delta) \tau\sigma\norm{K}^2.
    \]
    Then with $\sigmaTest_0=\tauTest_0\tau\inv\sigma$, $\tauTest_{i+1}\defeq \theta\tauTest_i$, and $\sigmaTest_{i+1}\defeq \theta\sigmaTest_i$ we verify \eqref{eq:pdhgm-step-cond}. 
    Note that $c>1$, so $\{\tauTest_i\}_{i \in \N}$ grows exponentially.
    Thus we obtain linear convergence of the primal variables from \eqref{eq:pdhgm-test-precond-lower-approx} and \eqref{eq:pdhgm-rate}.{\footnotemark} %
    Note that $\omega_i = \inv\sigmaTest_{i+1}\inv\sigma_{i+1}\tauTest_i\tau_i = 1$ as in \cref{ex:pdhgm-weak}.
\end{example}
\footnotetext{It is possible to improve \eqref{eq:pdhgm-test-precond-lower-approx} for linear convergence of the dual variables.}

\begin{remark}[Global and local convergence]
    \label{rem:global}
    Convergence results based on submonotonicity are generally local due to the neighbourhood $\mathcal{U}$. 
	With $G$ and $F^*$ convex, $H$ is monotone ($\gamma=0$ and $\rho=0$).
	Therefore the weak convergence results of \cref{ex:pdhgm-weak} are global.
    This is an \emph{a priori} convergence property. In finite dimensions, the convergence is strong, so eventually the iterates will be in the neighbourhood $\mathcal{U}$ (for some $\realoptu \in \inv H(0)$) of partial strong submonotonicity. Then the algorithm switches to faster convergence. This is only known \emph{a posteriori} as it depends on $\realoptu$.
\end{remark}

\subsection{Application examples}
\label{sec:applications}

We now look at regularity and convergence rate results for a few prototypical applications. 
We will show that linear convergence of the PDPS can in specific cases be obtained by the unaccelerated primal-dual algorithm without full strong convexity. These results improve known convergence results for this algorithm. These results are comparable to those obtained in \cite{lewis2013partial,liang2014local} for other classes of algorithms based on a related theory of smooth submanifolds.
Throughout, we rely on the marginalisation of submonotonicity in \cref{prop:marginalisation}, so seek to verify \eqref{eq:marginalised-submonotonicity}.

\begin{example}[Lasso]
	\label{ex:lasso}
	For some data matrix and vector $K \in \R^{m \times n}$ and $b \in \R^m$, and regularisation parameter $\alpha>0$, consider the Lasso or regularised regression problem
	\[
		\min_x~ \frac{1}{2}\norm{b-Kx}^2 + \alpha \norm{x}­_1.
	\]
    This can be written in the saddle point form \eqref{eq:saddle} with $G(x)=\alpha \norm{x}_1$, and $F^*(y)=\frac{1}{2}\norm{y}^2-\iprod{b}{y}$.
    Let $(\realoptx, \realopty) \in \inv H(0)$ for $H$ as in \eqref{eq:h}.
    The PDPS converges linearly whenever the primal solution $\realoptx=0$ and is unique, and any corresponding dual solution is strictly complementary ($\abs{[K^*\realopty]_k} < 1$ for all coordinates $k=1,\ldots,n$).
\end{example}

\begin{demonstration}
    We use \cref{prop:marginalisation} with \cref{thm:pdps-convergence,ex:pdhgm-linear}.
    We therefore need to verify \eqref{eq:marginalised-submonotonicity}.
	Since $F^*$ is strongly convex, we can take $\rho=1$ in \eqref{eq:marginalised-submonotonicity-fstar}.
    To prove \eqref{eq:marginalised-submonotonicity-g} for $G(x)=\sum_{k=1}^n \abs{x_k}$, we proceed as in \cref{lemma:subreg-1norm}, however \eqref{eq:marginalised-submonotonicity-g} is much more restrictive than the $(I, \gamma I)$-strong submonotonicity shown in the lemma.
    Let $(x^*, y^*) \in \inv H(0)$.
    Write $q^* \defeq (q_1^*,\ldots,q_n^*)=-K^*y^* \in \subdiff G(x^*)$ and $x^*=(x_1^*,\ldots,x_n^*)$.
    Of course $q_k^* \in \subdiff\abs{\freevar}(0)$.
    We need to prove
    \begin{equation}
        \label{eq:lasso:pdps:cond}
        \iprod{q_k-q_k^*}{x_k-x_k^*} \ge \frac{\gamma}{2}\abs{x_k-x_k^*}^2
        \quad (q_k \in \subdiff\abs{\freevar}(x_k);\, k=1,\ldots,n).
    \end{equation}
    If $x_k^* \ne 0$ for some $k$, we are only able to take $\gamma=0$ using the monotonicity of $\subdiff\abs{\freevar}$.
    However, by assumption, $x_k^* = 0$ for all $k$.
    Fix $k$ and let $\gamma>0$ be arbitrary. If $x_k>0$, $q_k=1 \ge q_k^*$. Under strictly complementarity the inequality is strict,  so \eqref{eq:lasso:pdps:cond} for this $k$ holds in a small enough neighbourhood of $x_k^*=0$.
    The case $x_k<0$ is analogous.
    This proves, via  \cref{prop:marginalisation}, the submonotonicity of $H$ required by  \cref{thm:pdps-convergence}.
    Finally, \cref{ex:pdhgm-linear} proves \cref{eq:pdhgm-step-cond} with testing parameters that yield linear convergence. By \cref{rem:global} the convergence is global.
\end{demonstration}

\begin{example}[Denoising-type problems]
	\label{ex:tv}
	Consider then for $K \in \R^{m \times n}$ and vector $b \in \R^n$ a problem of the form
	\[
		\min_x~ \frac{1}{2}\norm{b-x}^2 + \alpha \norm{Kx}_1,
	\]
    With $K$ a discretised gradient operator this includes in particular total variation denoising (anisotropic, for simplicity) and with $K$ representing decomposition into a wavelet basis, wavelet-based denoising.
    In the saddle point form the problem is
    \[
        \min_x\max_y~\frac{1}{2}\norm{b-x}^2 + \iprod{Kx}{y} + \delta_{[-\alpha, \alpha]^m}(y).
    \]
    Since the indicator function $F^*=\delta_{[-\alpha, \alpha]^m}$ is not strongly convex, but $G(x)=\frac{1}{2}\norm{b-x}^2$ is strongly convex, standard results \cite{chambolle2010first} give the $O(1/N^2)$ rate for the accelerated PDPS.

    Let $(\realoptx, \realopty) \in \inv H(0)$ for $H$ as in \eqref{eq:h}.
    Using subregularity the PDPS, in fact, converges linearly whenever $[K \realoptx]_k \ne 0$ for all $k=1,\ldots,m$. In the TV denoising case, this means that the method convergences linearly when the solution image does not contain any flat regions, while in the wavelet denoising case this means that all wavelet coefficients of the solution have to be non-zero.
\end{example}

\begin{demonstration}
    We again use \cref{prop:marginalisation} with \cref{thm:pdps-convergence,ex:pdhgm-linear}.
    We therefore need to verify \eqref{eq:marginalised-submonotonicity}.
    Since $G$ is strongly convex, we can take $\gamma=1$ in \eqref{eq:marginalised-submonotonicity-g}.
    To prove \eqref{eq:marginalised-submonotonicity-fstar}, we proceed as in \cref{lemma:subreg-ball-indicator}, however \eqref{eq:marginalised-submonotonicity-fstar} is much more restrictive than the $(I, \gamma I)$-strong submonotonicity shown in the lemma.
    Let $(x^*, y^*) \in \inv H(0)$. Observe that $x^*$ is unique by the strong convexity of $G$.
    Write $z^* = z^*=(z_1^*,\ldots,z_m^*) \defeq Kx^* \in \subdiff F^*(y^*)$.
    Then $z_k^* \in \subdiff\delta_{[-\alpha,\alpha]}(y_k^*)$.
    We need to prove 
    \begin{equation}
        \label{eq:tv:pdps:cond}
        \iprod{z_k-z_k^*}{y_k-y_k^*} \ge \frac{\rho}{2}\abs{y_k-y_k^*}^2
        \quad (z_k \in \subdiff\delta_{[-\alpha,\alpha]}(y_k);\, k=1,\ldots,m).
    \end{equation}
    If $z_k^* = 0$ for some $k$, we are only able to take $\rho=0$ using the monotonicity of $\subdiff F^*$.
    Otherwise, let $\rho>0$ be arbitrary and $k$ fixed. If $z^*_k > 0$, we have $y_k^*=\alpha$. 
    Then $y_k \le y_k^*$. It is clear that \eqref{eq:tv:pdps:cond} holds for $y_k=y_k^*$ where as for $y_k<y_k^*$, we have $z_k=0$. Thus the left hand side of \eqref{eq:tv:pdps:cond} is positive, and the inequality holds for $y_k$ in a small enough neighbourhood around $y_k^*=\alpha$.
    The case $z^*_k<0$ is similar. 
    This proves, via  \cref{prop:marginalisation}, the submonotonicity of $H$ required by  \cref{thm:pdps-convergence}.
    Finally, \cref{ex:pdhgm-linear} proves \cref{eq:pdhgm-step-cond} with testing parameters that yield linear convergence. By \cref{rem:global} the convergence is global.
\end{demonstration}

\section{Conclusions}
\label{sec:conclusions}

We have studied notions of partial strong submonotonicity and partial subregularity motivated by convergence proofs of optimisation methods.
We have showed that these concepts can be used to a show improved linear convergence rates, where conventional strong convexity or strong monotonicity is not present. 
To facilitate the verification of partial submonotonicity, in particular for saddle point problems, in further research we would like to develop a characterisation of partial submonotonicity based on nonsmooth derivatives, similar in spirit to the Mordukhovich criterion for the Aubin property.
The latter we have studied in context of the PDPS for nonlinear $K$ in \cite{tuomov-pdex2stability}.
Considering the non-trivial characterisations of metric subregularity in \cite{gfrerer2011first}, this may or may not be a fruitful path.
For local monotonicity and hypomonotonicity, which do not involve the infimisation present in submonotonicity, coderivative characterisations are considered in \cite{mordukhovich2016local,chieu2015coderivative}.

\section*{Acknowledgements}

This research has been supported by the EPSRC First Grant EP/P021298/1, ``PARTIAL Analysis of Relations in Tasks of Inversion for Algorithmic Leverage''.
The author would also like to thank Christian Clason for pointing out the papers \cite{aragon2008characterization,artacho2013metric}.

\subsection*{A data statement for the EPSRC}
This is a theory paper that does not rely on any data or program code.

 \providecommand{\eprint}[1]{\href{http://arxiv.org/abs/#1}{arXiv:#1}}
  \providecommand{\eprint}[1]{\href{http://arxiv.org/abs/#1}{arXiv:#1}}
  \providecommand{\noopsort}[1]{}


\end{document}